\pdfoutput=1
\RequirePackage{ifpdf}
\ifpdf 
\documentclass[pdftex]{sigma}
\else
\documentclass{sigma}
\fi

\usepackage[all]{xy}

\numberwithin{equation}{section}

\newtheorem{Theorem}{Theorem}[section]
\newtheorem{Lemma}[Theorem]{Lemma}
\newtheorem{Proposition}[Theorem]{Proposition}
 { \theoremstyle{definition}
\newtheorem{Definition}[Theorem]{Definition}
\newtheorem{Example}[Theorem]{Example}
\newtheorem{Remark}[Theorem]{Remark} }

\newcommand{\nmod}{\textsf{-}\mathsf{mod}}
\newcommand{\modn}{\mathsf{mod}\textsf{-}}
\newcommand{\Mull}{\mathsf{M}}
\newcommand{\HC}{\mathsf{R}}
\newcommand{\rHC}{{}^*\mathsf{R}}
\def\simto{\overset{\sim}\to}

\begin{document}

\allowdisplaybreaks

\newcommand{\arXivNumber}{1706.04743}

\renewcommand{\thefootnote}{}

\renewcommand{\PaperNumber}{007}

\FirstPageHeading

\ShortArticleName{Alvis--Curtis Duality and a Generalized Mullineux Involution}

\ArticleName{Alvis--Curtis Duality for Finite General Linear Groups and a Generalized Mullineux Involution\footnote{This paper is a~contribution to the Special Issue on the Representation Theory of the Symmetric Groups and Related Topics. The full collection is available at \href{https://www.emis.de/journals/SIGMA/symmetric-groups-2018.html}{https://www.emis.de/journals/SIGMA/symmetric-groups-2018.html}}}

\Author{Olivier DUDAS~$^\dag$ and Nicolas JACON~$^\ddag$}

\AuthorNameForHeading{O.~Dudas and N.~Jacon}

\Address{$^\dag$~Universit\'e Paris Diderot, UFR de Math\'ematiques, B\^atiment Sophie Germain,\\
\hphantom{$^\dag$}~5 rue Thomas Mann, 75205 Paris CEDEX 13, France}
\EmailD{\href{mailto:olivier.dudas@imj-prg.fr}{olivier.dudas@imj-prg.fr}}

\Address{$^\ddag$~Universit\'e de Reims Champagne-Ardenne, UFR Sciences exactes et naturelles,\\
\hphantom{$^\ddag$}~Laboratoire de Math\'ematiques EA 4535, Moulin de la Housse BP 1039, 51100 Reims, France}
\EmailD{\href{mailto:nicolas.jacon@univ-reims.fr}{nicolas.jacon@univ-reims.fr}}

\ArticleDates{Received June 17, 2017, in f\/inal form January 22, 2018; Published online January 30, 2018}

\Abstract{We study the ef\/fect of Alvis--Curtis duality on the unipotent representations of $\mathrm{GL}_n(q)$ in non-def\/ining characteristic $\ell$. We show that the permutation induced on the simple modules can be expressed in terms of a generalization of the Mullineux involution on the set of all partitions, which involves both $\ell$ and the order of $q$ modulo $\ell$. }

\Keywords{Mullineux involution; Alvis--Curtis duality; crystal graph; Harish-Chandra theo\-ry}

\Classification{20C20; 20C30; 05E10}

\renewcommand{\thefootnote}{\arabic{footnote}}
\setcounter{footnote}{0}

\section{Introduction}

Let $\mathfrak{S}_n$ be the symmetric group on $n$ letters. It is well known that the complex irreducible representations of $\mathfrak{S}_n$ are naturally labelled by the set of partitions of~$n$. Tensoring with the sign representation induces a permutation of the irreducible characters which corresponds to the conjugation of partitions. An analogous involution can be considered for representations in positive characteristic $p > 0$. In this case, the irreducible representations are parametrized by $p$-regular partitions and the permutation induced by tensoring with the sign representation has a more complicated combinatorial description. The explicit computation of this involution $\Mull_p$ was f\/irst conjectured by Mullineux in \cite{Mu} and proved by Ford--Kleshchev in \cite{ForKle}. Their result was later generalized to representations of Hecke algebras at a root of unity by Brundan \cite{Brun} with a view to extending the def\/inition of $\Mull_p$ to the case where $p$ is any positive integer.

For representations of a f\/inite group of Lie type $G$, the Alvis--Curtis duality $\mathsf{D}_G$ functor (see \cite{DL82}) provides an involution of the same nature. For example, the Alvis--Curtis dual of a~complex unipotent character of $\mathrm{GL}_n(q)$ parametrized by a partition is, up to a sign, the character parametrized by the conjugate partition. Unlike the case of symmetric groups, the duality $\mathsf{D}_G$ does not necessarily map irreducible representations to irreducible representations, but only to complexes of representations. Nevertheless, Chuang--Rouquier showed in~\cite{ChRou} how to single out a~specif\/ic composition factor in the cohomology of these complexes, yielding an involution $\mathsf{d}_G$ on the set of irreducible representations of~$G$ in non-def\/ining characteristic $\ell \geq 0$.

The purpose of this paper is to explain how to compute this involution $\mathsf{d}_G$ using the Harish-Chandra theory and the representation theory of Hecke algebras, see Theorem \ref{thm:main}. We illustrate our method on the irreducible unipotent representations of $\mathrm{GL}_n(q)$, which are parametrized by partitions of $n$. This yields an explicit involution $\Mull_{e,\ell}$ on the set of partitions of $n$ which depends both on $\ell$ and on the order $e$ of $q$ modulo $\ell$ (with the convention that $e=\ell$ if $\ell \,|\, q-1$), see Theorem~\ref{thm:main2}.

\begin{Theorem} Assume that $\ell \nmid q$. Let $S(\lambda)$ be the simple unipotent module of $\mathrm{GL}_n(q)$ over $\mathbb{F}_\ell$ parametrized by the partition $\lambda$. Then \begin{gather*}\mathsf{d}_G(S(\lambda)) = S(\Mull_{e,\ell}(\lambda)).\end{gather*}
\end{Theorem}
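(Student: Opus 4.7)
The plan is to combine two ingredients. First, the general result Theorem~\ref{thm:main} reformulates $\mathsf{d}_G$ Harish-Chandra-theoretically: on each Harish-Chandra series of simple unipotent $\mathbb{F}_\ell \mathrm{GL}_n(q)$-modules, the involution $\mathsf{d}_G$ is controlled by an involution on simple modules of the corresponding Hecke endomorphism algebra, namely the Mullineux involution attached to that Hecke algebra. Second, for $G = \mathrm{GL}_n(q)$ the combinatorial structure of these series is well understood: by work of Dipper--James and Geck--Hiss--Malle, the cuspidal simple unipotent modules of Levi subgroups are parametrized by $e$-cores, and the associated Hecke endomorphism algebras are of type~$A$ at an $e$-th root of unity, over a field of characteristic~$\ell$.

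The strategy then breaks into three steps. First I would match the parametrizations: using the $e$-core/$e$-quotient bijection, together with the parametrization of simple Hecke-algebra modules via the canonical basic set (or Kashiwara crystal) of a level-one Fock space, the set of simple unipotent $\mathbb{F}_\ell \mathrm{GL}_n(q)$-modules is identified with the set of partitions of~$n$; this pins down the labelling $\lambda \mapsto S(\lambda)$. Second, I would apply Theorem~\ref{thm:main} to compute $\mathsf{d}_G(S(\lambda))$ in terms of Brundan's Mullineux involution for the relevant type-$A$ Hecke algebra at a parameter depending on $e$ and~$\ell$. Third, I would translate this back to partitions of~$n$ via the $e$-core/$e$-quotient correspondence and check that the result agrees, by construction, with the combinatorial map $\Mull_{e,\ell}$ set up in Theorem~\ref{thm:main2}: indeed, $\Mull_{e,\ell}$ is defined so as to be the partition-level avatar of precisely this Hecke-algebraic Mullineux map, with the $\ell$-dependence reflecting the fact that the canonical basic set parametrization of Hecke simples depends on the characteristic.

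The main obstacle lies in the first ingredient, i.e.\ in the justification of Theorem~\ref{thm:main}: one must show that Chuang--Rouquier's distinguished composition factor of the complex $\mathsf{D}_G S(\lambda)$ corresponds, under the Hecke-algebra equivalence, to the sign-twist of the simple indexing~$S(\lambda)$. This requires a careful analysis of the compatibility of Alvis--Curtis duality with Harish-Chandra induction $\HC$ and restriction $\rHC$ at the level of derived categories, together with a unitriangularity argument for the decomposition matrix of the Hecke algebra, ensuring that the composition factor selected by Chuang--Rouquier is both well-defined and coincides with the image under the Hecke-algebra sign involution. Once this identification is established, the remaining work is purely combinatorial and is subsumed by the definition of~$\Mull_{e,\ell}$.
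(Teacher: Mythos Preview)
Your proposal rests on a mistaken picture of the Harish-Chandra series for unipotent $\mathbb{F}_\ell\mathrm{GL}_n(q)$-modules, and this derails the argument. You describe the cuspidal pairs as being parametrized by $e$-cores, with the associated Hecke algebra being a single type-$A$ Hecke algebra at an $e$-th root of unity, and you propose to navigate via the $e$-core/$e$-quotient bijection. That is the block-theoretic picture (Fong--Srinivasan), not the Harish-Chandra one. In the paper's setting the cuspidal simple unipotent $k\mathrm{GL}_m(q)$-modules exist only for $m=1$ or $m=e\ell^i$, and the general cuspidal pair is $(L_{\mathbf{m}},X_{\mathbf{m}})$ for $\mathbf{m}\in\mathcal{N}(n)$. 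The Hecke algebra is then a \emph{tensor product}
\[
\mathcal{H}_{\mathbf{m}}\;\simeq\;\mathcal{H}_q(\mathfrak{S}_{m_{-1}})\otimes k\mathfrak{S}_{m_0}\otimes\cdots\otimes k\mathfrak{S}_{m_n},
\]
so the relevant Hecke-level involution is not a single Mullineux map at parameter $e$, but the product of $\mathsf{M}_e$ on the first factor and $\mathsf{M}_\ell$ on each of the remaining group-algebra factors. Correspondingly, the partition-level translation uses the $e$-$\ell$-adic decomposition $\lambda=\lambda_{(-1)}\sqcup(\lambda_{(0)})^e\sqcup(\lambda_{(1)})^{e\ell}\sqcup\cdots$ (and Dipper--Du's tensor product theorem), not the $e$-core/$e$-quotient correspondence. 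Without this, your third step cannot recover $\mathsf{M}_{e,\ell}$ as defined.

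There is also a structural gap you do not address. Theorem~\ref{thm:main} requires that for every intermediate standard Levi $\mathbf{M}\supset\mathbf{L}_{\mathbf{m}}$, any $G$-conjugate of $(L_{\mathbf{m}},X_{\mathbf{m}})$ inside $M$ is already $M$-conjugate to it. This fails in $\mathrm{GL}_n(q)$ (the paper gives the example $\mathbf{m}=(e,1)$). The paper's remedy is to work not in $G$ but in the intermediate Levi $G_{\mathbf{m}}=\mathrm{GL}_{m_{-1}}\times\mathrm{GL}_{em_0}\times\cdots$, where the conjugacy hypothesis does hold and the Hecke algebra is unchanged (Lemma~\ref{lem:heckeiso}); one then computes $\mathsf{d}_{G_{\mathbf{m}}}$ via Theorem~\ref{thm:main} and Linckelmann--Schroll, and transports the answer to $G$ using \eqref{eq:curtistype} and \eqref{eq:tensorprod} at the level of Grothendieck groups, filtering by cuspidal depth. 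Your proposal skips this intermediate-Levi passage entirely. Finally, the ``main obstacle'' you identify --- proving Theorem~\ref{thm:main} itself, via a unitriangularity of decomposition matrices --- is not where the work lies: the selection of the distinguished composition factor comes from the perversity of $\mathsf{D}_G$ (Proposition~\ref{prop:perverse}), not from any decomposition-matrix argument, and Theorem~\ref{thm:main} is taken as input here.
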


The involution $\Mull_{e,\ell}$ is a generalization of the original Mullineux involution since it is def\/ined on the set of all partitions, and coincides with $\Mull_e$ on the set of $e$-regular partitions. Such a~generalization already appeared in a work of Bezrukavnikov~\cite{Be} and Losev~\cite{Lo} on wall-crossing functors for representations of rational Cherednik algebras, but in the case where $\ell \gg 0$.

Kleshchev showed in \cite{Kle} that the Mullineux involution $\Mull_p$ can also be interpreted in the language of crystals for Fock spaces in af\/f\/ine type~$A$, which are certain colored oriented graphs whose vertices are labeled by partitions. More precisely, the image by $\Mull_p$ of a $p$-regular partition~$\lambda$ is obtained by changing the sign of each arrow in a path from the empty partition to $\lambda$ in the graph. We propose a def\/inition of several higher level crystal operators on the Fock space which give a similar description for our generalized involution $\Mull_{e,\ell}$, see Proposition \ref{prop:main3}.

The paper is organized as follows. In Section~\ref{section2} we introduce the Alvis--Curtis duality for f\/inite reductive groups and show how to compute it within a given Harish-Chandra series using a similar duality for the corresponding Hecke algebra. Section~\ref{section3} illustrates our method in the case of f\/inite general linear groups. We give in Section~\ref{sec:hcgln} the def\/inition of a generalized version of the Mullineux involution and show in Theorem~\ref{thm:main2} that it is the shadow of the Alvis--Curtis duality for $\mathrm{GL}_n(q)$. The f\/inal section is devoted to an interpretation of our result in the context of the theory of crystal graphs.

\section{Alvis--Curtis duality}\label{section2}

In this section we investigate the relation between the Alvis--Curtis duality for a f\/inite reductive group within a Harish-Chandra series and a similar duality in the Hecke algebra associated to the series.

\subsection{Notation}\label{sec:notation}
Let $\mathbf{G}$ be a connected reductive algebraic group def\/ined over an algebraic closure of a f\/inite f\/ield of characteristic $p$, together with an endomorphism $F$, a power of which is a Frobenius endomorphism. Given an $F$-stable closed subgroup $\mathbf{H}$ of $\mathbf{G}$, we will denote by $H$ the f\/inite group of f\/ixed points $\mathbf{H}^F$. The group $G$ is a \emph{finite reductive group}.

We will be interested in the modular representations of $G$ in non-def\/ining characteristic. We f\/ix a prime number $\ell$ dif\/ferent from $p$ and an $\ell$-modular system $(K,\mathcal{O},k)$ which is assumed to be large enough for~$G$, so that the algebras $KG$ and $kG$ split. Throughout this section $\Lambda$ will denote any ring among $K$, $\mathcal{O}$ and~$k$.

Given a f\/inite-dimensional $\Lambda$-algebra $A$, we denote by $A\nmod$ (resp. $\mathsf{mod}\text{-}A$) the category of f\/inite-dimensional left (resp.\ right) $A$-modules. The corresponding bounded derived category will be denoted by $D^b(A\nmod)$ (resp. $D^b(\mathsf{mod}\text{-}A)$) or simply $D^b(A)$ when there is no risk of confusion. We will identify the Grothendieck group of the abelian category $A\nmod$ with the Grothendieck group of the triangulated category $D^b(A)$. It will be denoted by $K_0(A)$. We will write $[M]$ for the class of an $A$-module $M$ in $K_0(A)$.

\subsection{Harish-Chandra induction and restriction}\label{sec:hcindres}
Given an $F$-stable parabolic subgroup $\mathbf{P}$ of $\mathbf{G}$ with Levi decomposition $\mathbf{P} = \mathbf{L} \mathbf{U}$, where $\mathbf{L}$ is $F$-stable, we denote by
\begin{gather*} \xymatrix{ \Lambda L\nmod \ar@/^1pc/[r]^{\HC_L^G} & \ar@/^1pc/[l]^{\rHC_L^G} \Lambda G\nmod }\end{gather*}
the Harish-Chandra induction and restriction functors. Under the assumption on~$\ell$, they form a biajdoint pair of exact functors. The natural transformations given by the adjunction (unit and counit) are denoted as follows
\begin{gather*} \operatorname{Id} \xrightarrow{\overline{\eta}^{L\subset G}} \HC_L^G \rHC_L^G \xrightarrow{\underline{\varepsilon}^{L\subset G}} \operatorname{Id} \qquad \text{and} \qquad \operatorname{Id}\xrightarrow{\underline{\eta}^{L\subset G}} \rHC_L^G \HC_L^G \xrightarrow{\overline{\varepsilon}^{L\subset G}} \operatorname{Id},\end{gather*}
where $\operatorname{Id}$ denotes here the identity functor. When it is clear from the context, we will usually drop the superscript $L \subset G$.

Let $\mathbf{Q}$ be another $F$-stable parabolic subgroup with $F$-stable Levi decomposition $\mathbf{Q} = \mathbf{M}\mathbf{V}$. If $\mathbf{P}\subset \mathbf{Q}$ and $\mathbf{L} \subset \mathbf{M}$ then the Harish-Chandra induction and restriction functors satisfy $\HC_M^G \circ \HC_L^M \simeq \HC_L^G$ and $\rHC_L^M \circ \rHC_M^G \simeq \rHC_L^G$. Together with the counit $\underline{\varepsilon}^{L \subset M}\colon \HC_L^M \rHC_L^M \longrightarrow 1$, this gives a natural transformation
\begin{gather*}\varphi^{L \subset M \subset G} \colon \ \HC_L^G \rHC_L^G \longrightarrow \HC_M^G \rHC_M^G.\end{gather*}
If $\mathbf{R}$ is any other $F$-stable parabolic subgroup containing $\mathbf{Q}$ with an $F$-stable Levi complement $\mathbf{N}$ containing $\mathbf{M}$ then the natural isomorphism giving the transitivity of Harish-Chandra induction and restriction can be chosen so that $\varphi^{M\subset N \subset G} \circ \varphi^{L \subset M \subset G} = \varphi^{L \subset N \subset G}$ (see for example \cite[Section~4]{CaEn} or \cite[Section~III.7.2]{DS}).

\subsection{Alvis--Curtis duality functor}\label{sec:acduality}
We now f\/ix a Borel subgroup $\mathbf{B}$ of $\mathbf{G}$ containing a maximal torus $\mathbf{T}$, both of which are assumed to be $F$-stable. Let $\Delta$ be the set of simple roots def\/ined by $\mathbf{B}$. The $F$-stable parabolic subgroups containing $\mathbf{B}$ are parametrized by $F$-stable subsets of $\Delta$. They have a unique Levi complement containing $\mathbf{T}$. Such Levi subgroups and parabolic subgroups are called \emph{standard}. Given $r \geq 0$, we denote by $\mathcal{L}_r$ the (f\/inite) set of $F$-stable standard Levi subgroups corresponding to a subset $I \subset \Delta$ satisfying $|I/F| = |\Delta/F|-r$. In particular $\mathcal{L}_0 = \{\mathbf{G}\}$ and $\mathcal{L}_{|\Delta/F|} = \{\mathbf{T}\}$. Following \cite{CaRi,DL82,DS} we can form the complex of exact functors
\begin{gather*} 0 \longrightarrow \HC_{T}^G \rHC_T^G \longrightarrow \cdots \longrightarrow
\bigoplus_{\mathbf{L} \in \mathcal{L}_2} \HC_L^G \rHC_L^G \longrightarrow
\bigoplus_{\mathbf{L} \in \mathcal{L}_1} \HC_L^G \rHC_L^G \longrightarrow \operatorname{Id} \longrightarrow 0,\end{gather*}
where $\operatorname{Id}$ is in degree $0$. It yields a functor $\mathsf{D}_G$ on the bounded derived category $D^b(\Lambda G)$ of f\/initely generated $\Lambda G$-modules, called the \emph{Alvis--Curtis duality functor}. Note that the original def\/inition of the duality by Alvis and Curtis \cite{Alv79,Cur80} refers to the linear endomorphism on $K_0(K G)$ induced by this functor. The complex above was introduced by Deligne--Lusztig in~\cite{DL82}.

\begin{Theorem}[Cabanes--Rickard \cite{CaRi}]
The functor $\mathsf{D}_G$ is a self-equivalence of $D^b(\Lambda G)$ satisfying
\begin{gather}\label{eq:curtistype}
\mathsf{D}_G \circ \mathsf{R}_L^G \simeq \mathsf{R}_L^G \circ \mathsf{D}_L[r]
\end{gather}
for every $\mathbf{L} \in \mathcal{L}_r$.
\end{Theorem}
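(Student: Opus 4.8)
The plan is to prove the two assertions of the theorem separately: first that $\mathsf{D}_G$ is a self-equivalence of $D^b(\Lambda G)$, and then the Curtis-type formula \eqref{eq:curtistype}. For the self-equivalence, I would exhibit an explicit homotopy-inverse. The natural candidate is the "dual" complex built from the other adjunction (unit and counit $\underline{\eta}$, $\overline{\varepsilon}$ instead of $\overline{\eta}$, $\underline{\varepsilon}$), reading the terms $\HC_L^G \rHC_L^G$ in the reverse order; since $\HC_L^G$ and $\rHC_L^G$ are biadjoint, this dual complex computes a functor that should be naturally isomorphic to $\mathsf{D}_G$ again (up to a shift absorbed in the bookkeeping), so one really wants $\mathsf{D}_G \circ \mathsf{D}_G \simeq \operatorname{Id}$. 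Concretely, I would form the total complex of the bicomplex $\mathsf{D}_G \circ \mathsf{D}_G$ and show that the differentials, built from the $\varphi^{L\subset M\subset G}$ together with the unit/counit maps, assemble into a complex whose terms are indexed by pairs $\mathbf{M} \subset \mathbf{L}$ of standard Levi subgroups; the coherence relation $\varphi^{M\subset N\subset G}\circ\varphi^{L\subset M\subset G} = \varphi^{L\subset N\subset G}$ is exactly what guarantees that this is a genuine complex and that a suitable filtration (by the "distance" between $\mathbf{M}$ and $\mathbf{L}$) has contractible subquotients except in the extreme degree, where one recovers $\operatorname{Id}$. This is the standard argument that an "idempotent-like" complex of this shape is homotopy equivalent to its top term; the combinatorial heart is a Koszul-type acyclicity on the poset of $F$-stable subsets of $\Delta$.

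For the Curtis-type formula, I would compute $\mathsf{D}_G \circ \HC_L^G$ by plugging the definition of $\mathsf{D}_G$ and using transitivity of Harish-Chandra induction. Each term $\HC_{M}^G \rHC_{M}^G \circ \HC_L^G$ with $\mathbf{M}$ standard can be rewritten, after choosing a standard parabolic with Levi $\mathbf{M}$ in good position relative to the one with Levi $\mathbf{L}$, using a Mackey-type decomposition; but in the setting at hand (non-defining characteristic, standard Levi subgroups), the relevant fact is that $\rHC_{M}^G \HC_L^G$ decomposes as a sum of $\HC_{M\cap{}^w\!L}^{M}\rHC_{\dots}^{L}$-type terms indexed by double cosets, and the terms that survive and reorganize correctly are those with $\mathbf{L}\subset\mathbf{M}$ (or conjugates thereof), contributing $\HC_{M}^G\HC_L^M\rHC_L^M\rHC_{M}^G \simeq \HC_L^G(\HC_{L}^{M}\rHC_{L}^{M})\rHC_{?}$ — more cleanly, one uses that for $\mathbf{L}\in\mathcal{L}_r$ the standard Levi subgroups $\mathbf{M}$ of $\mathbf{G}$ containing $\mathbf{L}$, say $\mathbf{M}\in\mathcal{L}_s$ with $s\le r$, correspond bijectively to standard Levi subgroups of $\mathbf{M}$, namely $\mathbf{L}\in\mathcal{L}_{r-s}^{\mathbf{M}}$. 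Pushing the $\HC_L^G$ to the outside via $\HC_M^G\HC_L^M \simeq \HC_L^G$ and $\rHC_L^M\rHC_M^G \simeq \rHC_L^G$ then identifies the subcomplex of $\mathsf{D}_G\circ\HC_L^G$ coming from these $\mathbf{M}$ with $\HC_L^G$ applied to the defining complex of $\mathsf{D}_L$, shifted by $r$ because the indexing degree in $\mathcal{L}_\bullet^{\mathbf{G}}$ and $\mathcal{L}_\bullet^{\mathbf{M}}$ differ by $r$. The remaining terms (those $\mathbf{M}$ not containing a conjugate of $\mathbf{L}$, and the off-diagonal Mackey contributions) must be shown to form an acyclic complementary complex, again by a filtration/contracting-homotopy argument; the compatibility of the $\varphi$'s is what makes the projection onto the $\HC_L^G\circ\mathsf{D}_L[r]$ part a map of complexes.

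The main obstacle I expect is controlling the Mackey formula cleanly at the level of complexes rather than just on Grothendieck groups: one needs the various natural isomorphisms (transitivity, Mackey decomposition, the identifications $\HC_M^G\HC_L^M\simeq\HC_L^G$) to be chosen compatibly with the differentials $\varphi^{L\subset M\subset G}$, so that the asserted decomposition of the complex $\mathsf{D}_G\circ\HC_L^G$ into a "$\HC_L^G\circ\mathsf{D}_L[r]$" part and an acyclic part is actually a direct-sum decomposition of complexes, not merely a degreewise one. Establishing that these choices can be made coherently — which is precisely the content flagged by the parenthetical reference to the compatibility of Harish-Chandra transitivity with the $\varphi$'s — is the delicate point; once it is in place, the homotopy equivalences are formal consequences of the poset combinatorics on subsets of $\Delta/F$. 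I would therefore organize the proof so that all such coherence statements are invoked from the cited references \cite{CaEn,DS}, and spend the real work on the two contracting-homotopy arguments.
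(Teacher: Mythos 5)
This theorem is not proved in the paper at all --- it is quoted from Cabanes--Rickard \cite{CaRi} --- so there is no internal argument to compare yours with; I can only assess the sketch on its own terms. The first half contains a genuine error. You reduce the self-equivalence to showing $\mathsf{D}_G\circ\mathsf{D}_G\simeq\operatorname{Id}$, on the grounds that the dual complex built from the other adjunction computes a functor naturally isomorphic to $\mathsf{D}_G$ up to shift. Both claims are false. Already for $G=\mathrm{GL}_2(q)$ and $\Lambda=K$ one computes $\mathsf{D}_G(K)\simeq\mathrm{St}[1]$ and $\mathsf{D}_G(\mathrm{St})\simeq K[1]$, hence $\mathsf{D}_G^2(K)\simeq K[2]$, while $\mathsf{D}_G^2(X)\simeq X$ for $X$ cuspidal; so $\mathsf{D}_G^2$ is neither $\operatorname{Id}$ nor any global shift of it, and no filtration of the total complex can have the shape you describe. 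The paper itself signals this: by Proposition~\ref{prop:perverse} the functor $\mathsf{D}_G$ spreads the cohomology of a simple module of depth $s$ over degrees $[-s,0]$, so squaring pushes cohomology strictly further down, and the remark that any quasi-inverse satisfies \eqref{eq:curtistype} with $r$ replaced by $-r$ tells you that the quasi-inverse is a genuinely different functor from $\mathsf{D}_G$. The correct quasi-inverse is the one attached to the $\Lambda$-linear dual $X^\vee$ of the bimodule complex $X$ underlying $\mathsf{D}_G$: it has the same terms (each $\Lambda Ge_{U}\otimes_{\Lambda L}e_{U}\Lambda G$ being self-dual by biadjointness) but placed in degrees $0,\dots,|\Delta/F|$, with differentials built from the units rather than the counits. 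What must actually be proved is that $X\otimes_{\Lambda G}X^\vee\simeq \Lambda G$ in the homotopy category of bimodules; your filtration/Koszul-acyclicity idea is the right kind of tool, but it has to be applied to this tensor product, not to $\mathsf{D}_G^2$.

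Your outline of the commutation \eqref{eq:curtistype} is much closer to a viable strategy: expand $\mathsf{D}_G\circ\HC_L^G$, apply the Mackey formula to each $\rHC_M^G\HC_L^G$, match the contributions indexed by pairs $(\mathbf{M},w)$ with ${}^{w^{-1}}\mathbf{M}\cap\mathbf{L}=\mathbf{L}'$ against the term of $\HC_L^G\circ\mathsf{D}_L$ indexed by the standard Levi $\mathbf{L}'$ of $\mathbf{L}$, and check that the degree bookkeeping produces exactly the shift $[r]$. Two caveats, though. Your statement of the combinatorial bijection is garbled: the terms of $\mathsf{D}_L$ are indexed by standard Levi subgroups $\mathbf{L}'\subset\mathbf{L}$, not by the $\mathbf{M}\supset\mathbf{L}$, and for a fixed $\mathbf{L}'$ there are in general many pairs $(\mathbf{M},w)$ contributing, so the surviving part is not simply ``the $\mathbf{M}$ containing $\mathbf{L}$''. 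And the ``acyclic complementary complex'' is not a direct summand that can be split off degreewise; one needs a filtration compatible with the differentials $\varphi^{M\subset N\subset G}$ whose graded pieces are either contractible or assemble into $\HC_L^G\circ\mathsf{D}_L[r]$, together with explicit homotopies. You do flag this as the delicate point, which is fair, but as written the argument is a plan rather than a proof, and the first half needs to be rebuilt around $X\otimes_{\Lambda G}X^\vee$ before the whole can stand.
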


Note that any quasi-inverse of $\mathsf{D}_G$ will also satisfy the relation~\eqref{eq:curtistype}, up to replacing $r$ by $-r$.

Chuang--Rouquier deduced in \cite{ChRou} from~\eqref{eq:curtistype} that the equivalence induced by $\mathsf{D}_G$ is \emph{perverse} with respect to the cuspidal depth (see below for the def\/inition). For the reader's convenience we recall here their argument.

Assume that $\Lambda$ is a f\/ield. The simple $\Lambda G$-modules are partitioned into Harish-Chandra series, see~\cite{Hiss}. Given a simple $\Lambda G$-module $S$, there exists $\mathbf{L}$ in $\mathcal{L}_s$ and a simple cuspidal $\Lambda L$-module~$X$ such that~$S$ appears in the head (or equivalently in the socle) of $\mathsf{R}_L^G(X)$. The pair $(\mathbf{L},X)$ is unique up to $G$-conjugation. We say that $S$ lies in the Harish-Chandra series of the cuspidal pair $(\mathbf{L},X)$ and we call $s$ the \emph{cuspidal depth} of $S$. In particular, the cuspidal modules are the modules with cuspidal depth zero.

\begin{Proposition}[Chuang--Rouquier \cite{ChRou}]\label{prop:perverse}
Assume that $\Lambda$ is a field. Let $S$ be a simple $\Lambda G$-module of cuspidal depth $s$. Then
\begin{itemize}\itemsep=0pt
 \item[$(i)$] $H^i(\mathsf{D}_G(S)) = 0$ for $i > 0$ and $i < -s$.
 \item[$(ii)$] The composition factors of $H^i(\mathsf{D}_G(S))$ have depth at most $s$.
 \item[$(iii)$] Among all the composition factors of $\bigoplus_i H^{i}(\mathsf{D}_G(S))$, there is a unique composition factor of depth~$s$. It is a submodule of $H^{-s}(\mathsf{D}_G(S))$, and it lies in the same Harish-Chandra series as~$S$.
 \end{itemize}
\end{Proposition}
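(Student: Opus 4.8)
The plan is to follow the Chuang–Rouquier argument for perversity of the Alvis–Curtis equivalence, which is essentially a formal consequence of the Curtis-type relation \eqref{eq:curtistype} together with the exactness and adjunction properties of Harish-Chandra induction and restriction recalled in Section~\ref{sec:hcindres}. I would argue by induction on the cuspidal depth $s$ of $S$. The base case $s=0$ means $S$ is cuspidal, so $\rHC_L^G(S)=0$ for every proper standard Levi $\mathbf{L}$ (this is the definition of cuspidality together with the fact that each term of the Deligne–Lusztig complex other than $\operatorname{Id}$ factors through such a $\rHC_L^G$). Hence applying $\mathsf{D}_G$ to $S$ only the degree-$0$ term survives, so $\mathsf{D}_G(S)\simeq S$ concentrated in degree $0$, and all three assertions hold trivially with the unique depth-$0$ composition factor being $S$ itself.

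For the inductive step, pick a cuspidal pair $(\mathbf{L},X)$ with $\mathbf{L}\in\mathcal{L}_s$ such that $S$ appears in the head of $\HC_L^G(X)$ (I write $\HC$ rather than $\rHC$ here — $S$ is a quotient of the Harish-Chandra induced module). The key point is that $X$ is cuspidal in $\Lambda L$, so $\mathsf{D}_L(X)\simeq X[0]$ by the base case applied inside $L$, and therefore \eqref{eq:curtistype} gives $\mathsf{D}_G(\HC_L^G X)\simeq \HC_L^G(\mathsf{D}_L X)[s]\simeq \HC_L^G(X)[s]$, a complex concentrated in degree $-s$. Now I would use the short exact sequence $0\to N\to \HC_L^G(X)\to S\to 0$ (where $N$ is the radical-like complement, or rather: $S$ is a quotient so such a sequence exists) and apply the triangulated functor $\mathsf{D}_G$ to obtain a distinguished triangle relating $\mathsf{D}_G(S)$, $\mathsf{D}_G(\HC_L^G X)=\HC_L^G(X)[s]$, and $\mathsf{D}_G(N)$. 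Taking the long exact cohomology sequence of this triangle, since $\HC_L^G(X)[s]$ is concentrated in degree $-s$, one reads off $H^i(\mathsf{D}_G(S))\cong H^{i+1}(\mathsf{D}_G(N))$ for $i\neq -s,-s-1$ and a six-term exact sequence around $i=-s$. Because every composition factor of $N$ has depth $<s$ (it cannot lie in the series of $(\mathbf{L},X)$, as that would force it into the head of $\HC_L^G(X)$, and the induction hypothesis handles strictly smaller depths) — more carefully, one filters $N$ by modules whose composition factors have controlled depth and applies induction to each — one gets that $H^j(\mathsf{D}_G(N))$ vanishes for $j>0$ and $j<-(s-1)=-s+1$ and has composition factors of depth $\le s-1$. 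Feeding this back yields $(i)$ and $(ii)$ for $S$.

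For $(iii)$, the only place a depth-$s$ composition factor of $\bigoplus_i H^i(\mathsf{D}_G S)$ can sit is in $H^{-s}(\mathsf{D}_G S)$, and from the six-term sequence this $H^{-s}$ injects (modulo a piece coming from $H^{-s+1}(\mathsf{D}_G N)$, which has depth $\le s-1$) into $H^{-s}(\HC_L^G(X)[s])=\HC_L^G(X)$. Thus the depth-$s$ part of $H^{-s}(\mathsf{D}_G S)$ embeds in $\HC_L^G(X)$, whose depth-$s$ composition factors all lie in the Harish-Chandra series of $(\mathbf{L},X)$ — the same series as $S$. Uniqueness and multiplicity one of the depth-$s$ factor should then follow from the fact that $\mathsf{D}_G$ is an equivalence and hence induces an isometry on Grothendieck groups: comparing $[\mathsf{D}_G(S)]$ with $\pm[S]$ plus lower-depth terms forces exactly one depth-$s$ composition factor, necessarily $S$ up to the series constraint. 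To be slightly more careful one invokes that the ``truncation to depth-$s$ series'' is a well-defined operation on the relevant Grothendieck/subquotient level (Harish-Chandra series give a partition of the simples), so that the self-equivalence $\mathsf{D}_G$, which respects cuspidal depth filtration by $(ii)$, induces a bijection on each depth-$s$ series.

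\textbf{Main obstacle.} The genuinely delicate step is not the triangle-chasing but controlling the module $N$: asserting that every composition factor of the radical of $\HC_L^G(X)$ has depth strictly less than $s$. This requires the structure of Harish-Chandra series — that $S$ is the \emph{only} composition factor of $\HC_L^G(X)$ lying in the series of $(\mathbf{L},X)$ appearing in the head, and more precisely that one can filter $N$ so the induction hypothesis applies termwise. One cleanly handles this by first reducing (via adjunction and the transitivity relations with the compatible natural transformations $\varphi^{L\subset M\subset G}$ from Section~\ref{sec:hcindres}) to understanding $\rHC$ of the terms of the Deligne–Lusztig complex, and then using that $S$ cuspidal-depth $s$ means $\rHC_M^G(S)$ has no cuspidal summand supported on series of depth $\ge s$ for $\mathbf{M}$ of smaller semisimple rank. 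In the write-up I would isolate this as the inductive heart and let the cohomological long exact sequence do the rest mechanically.
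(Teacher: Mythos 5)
Your overall strategy (induction on cuspidal depth via the triangle coming from $0\to N\to \HC_L^G(X)\to S\to 0$) is not the paper's, and it contains a genuine gap at exactly the point you flag as delicate: the assertion that every composition factor of $N=\ker\big(\HC_L^G(X)\twoheadrightarrow S\big)$ has depth $<s$ is false. All simple quotients of $\HC_L^G(X)$ lie in the Harish-Chandra series of $(L,X)$ and hence have depth exactly $s$; whenever $\mathcal{H}_G(L,X)$ has more than one simple module, $N$ surjects onto the other head constituents, and even when the series is a singleton the unique constituent typically occurs in $\HC_L^G(X)$ with multiplicity $\geq 2$ (head and socle). Your parenthetical justification (``lying in the series would force it into the head'') confuses ``isomorphic to a head constituent'' with ``not a composition factor of the radical''. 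Consequently $N$ has composition factors of the same depth $s$ as $S$, the induction on depth does not close, and no filtration of $N$ can repair this. A secondary weakness: deducing multiplicity one in (iii) from the induced automorphism of $K_0$ requires first knowing that depth-$s$ factors occur in a single cohomological degree (otherwise the alternating sum can cancel), and that concentration statement is exactly what your broken step was supposed to supply.

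The paper's proof avoids $N$ altogether. It computes, for any cuspidal pair $(M,Y)$ with $\mathbf{M}\in\mathcal{L}_r$,
\begin{gather*}
\operatorname{Hom}_{D^b(\Lambda G)}\big(\HC_M^G(Y),\mathsf{D}_G(S)[n]\big)\simeq \operatorname{Hom}_{D^b(\Lambda M)}\big(Y,\rHC_M^G(S)[r+n]\big)
\end{gather*}
by adjunction, the relation \eqref{eq:curtistype} applied to $\mathsf{D}_G^{-1}$, and $\mathsf{D}_M^{-1}(Y)\simeq Y$ for $Y$ cuspidal. Testing the socle of the extreme nonvanishing cohomology group against $\HC_M^G(Y)$ gives (i) and (ii), the Mackey formula identifies the series in degree $-s$, and replacing $Y$ by its projective cover $P_Y$ detects \emph{all} composition factors in the series of $(L,X)$ and shows they live only in degree $-s$, after which the self-equivalence gives multiplicity one. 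If you want to keep an inductive flavour, you would have to induct on something finer than depth (e.g., on the poset of cuspidal supports together with lengths), but the direct Hom computation is both shorter and complete.
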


\begin{proof}We denote by $(L,X)$ a cuspidal pair associated with $S$. Given any other cuspidal pair $(M, Y)$ with $\mathbf{M} \in \mathcal{L}_r$ and $n \in \mathbb{Z}$ we have
\begin{gather}
\operatorname{Hom}_{D^b(\Lambda G)} \big(\mathsf{R}_M^G(Y),\mathsf{D}_G(S)[n]\big)
 \simeq \operatorname{Hom}_{D^b(\Lambda G)} \big(\mathsf{D}_G^{-1}(\mathsf{R}_M^G(Y)),S[n]\big) \nonumber\\
\hphantom{\operatorname{Hom}_{D^b(\Lambda G)} \big(\mathsf{R}_M^G(Y),\mathsf{D}_G(S)[n]\big)}{} \simeq \operatorname{Hom}_{D^b(\Lambda G)} \big(\mathsf{R}_M^G(\mathsf{D}_M^{-1}(Y))[-r],S[n]\big) \nonumber\\
\hphantom{\operatorname{Hom}_{D^b(\Lambda G)} \big(\mathsf{R}_M^G(Y),\mathsf{D}_G(S)[n]\big)}{} \simeq \operatorname{Hom}_{D^b(\Lambda M)} \big(Y,{}^*\mathsf{R}_M^G(S)[r+n]\big),\label{eq:proofpervers}
\end{gather}
where in the last equality we used that $\mathsf{D}_M^{-1}(Y) \simeq Y$ since $Y$ is cuspidal. In particular, it is zero when $r<-n $ or when $M$ does not contain a $G$-conjugate of $L$, so in particular when $r > s$.

Take $n$ to be the smallest integer such that $H^n(\mathsf{D}_G(S)) \neq 0$ and consider a simple $\Lambda G$-modu\-le~$T$ in the socle of $H^n(\mathsf{D}_G(S))$. Let $(M,Y)$ be the cuspidal pair above which $T$ lies. Then the composition $\mathsf{R}_M^G(Y) \twoheadrightarrow T \hookrightarrow H^n(\mathsf{D}_G(S))$ yields a non-zero element in \begin{gather*} \operatorname{Hom}_{D^b(\Lambda G)} \big(\mathsf{R}_M^G(Y),\mathsf{D}_G(S)[n]\big).\end{gather*}
From~\eqref{eq:proofpervers} we must have $-n \leq r \leq s$ which proves~(i). Furthermore, if $n = -s$ then $r$ and $s$ are equal, and in that case
\begin{gather*}
\operatorname{Hom}_{D^b(\Lambda G)} \big(\mathsf{R}_M^G(Y),\mathsf{D}_G(S)[-s]\big)
\simeq \operatorname{Hom}_{D^b(\Lambda M)} \big(Y,{}^*\mathsf{R}_M^G(S)\big)
 \simeq \operatorname{Hom}_{\Lambda M} \big(Y,{}^*\mathsf{R}_M^G(S)\big).
\end{gather*}
By the Mackey formula, ${}^*\mathsf{R}_M^G(S)$ is isomorphic to a direct sum of $G$-conjugates of $X$. Therefore if $T$ lies in the socle of $H^{-s}(\mathsf{D}_G(S))$ then $Y$ is $G$-conjugate to $X$ which means that $T$ and $S$ lie in the same Harish-Chandra series.

Now if we replace $Y$ by its projective cover $P_Y$ in \eqref{eq:proofpervers} we get
\begin{gather*}\operatorname{Hom}_{D^b(\Lambda G)} \big(\mathsf{R}_M^G(P_Y),\mathsf{D}_G(S)[n]\big)
\simeq \operatorname{Hom}_{D^b(\Lambda M)} \big(P_Y,D_M\big({}^*\mathsf{R}_M^G(S)\big)[r+n]\big),\end{gather*}
which again is zero unless $ r \leq s$ or unless $r=s$ and $(M,Y)$ is conjugate to $(L,X)$. In that latter case we have
\begin{gather*}
\operatorname{Hom}_{D^b(\Lambda G)} \big(\mathsf{R}_L^G(P_X),\mathsf{D}_G(S)[n]\big) \simeq \operatorname{Hom}_{D^b(\Lambda L)} \big(P_X,\mathsf{D}_L\big({}^*\mathsf{R}_L^G(S)\big)[n+s]\big) \\
\hphantom{\operatorname{Hom}_{D^b(\Lambda G)} \big(\mathsf{R}_L^G(P_X),\mathsf{D}_G(S)[n]\big)}{}
 \simeq \operatorname{Hom}_{D^b(\Lambda L)} \big(P_X,{}^*\mathsf{R}_L^G(S)[n+s]\big),
\end{gather*}
since ${}^*\mathsf{R}_L^G(S)$ is a sum of conjugates of $X$. Therefore the composition factors of $D_G(S)$ lying in the Harish-Chandra series of $(L,X)$ can only appear in degree $-s$. They appear with multiplicity one since $\mathsf{D}_G$ is a self-equivalence, which proves~(iii).
\end{proof}

Using the property that $\mathsf{D}_G$ is a perverse equivalence (given in Proposition \ref{prop:perverse}) we can def\/ine a bijection on the set of simple $\Lambda G$-modules as follows: given a simple $\Lambda G$-module $S$ with cuspidal depth $s$ we def\/ine $\mathsf{d}_G(S)$ to be the unique simple $\Lambda G$-module with depth $s$ which occurs as a composition factor in the cohomology of $\mathsf{D}_G(S)$. Note that $\mathsf{d}_G(S)$ and $S$ lie in the same Harish-Chandra series.

\subsection{Compatibility with Hecke algebras}\label{sec:comphecke}
Given a cuspidal pair $(L,X)$ of $G$, we can form the endomorphism algebra
\begin{gather*} \mathcal{H}_G(L,X)= \operatorname{End}_G\big(\HC_L^G(X)\big).\end{gather*} By \cite[Theorem~2.4]{GHM}, the isomorphism classes of simple quotients of $\HC_L^G(X)$ (the Harish-Chandra series of $(L,X)$) are parametrized by the simple representations of $\mathcal{H}_G(L,X)$. The structure of this algebra was studied for example in \cite[Section~3]{GHM}; it is in general very close to be a~Iwahori--Hecke algebra of a Coxeter group.

Let $\mathbf{M}$ be an $F$-stable standard Levi subgroup of $\mathbf{G}$ containing $\mathbf{L}$. Since $\HC_M^G$ is fully-faithful, $\mathcal{H}_M(L,X) = \operatorname{End}_G(\HC_L^M(X))$ embedds naturally as a subalgebra of $\mathcal{H}_G(L,X)$. To this embedding one can associate the induction and restriction functors
\begin{gather*} \xymatrix{ \modn\mathcal{H}_M(L,X) \ar@/^1pc/[r]^{\operatorname{Ind}_{\mathcal{H}_M}^{\mathcal{H}_G}} & \ar@/^1pc/[l]^{\operatorname{Res}_{\mathcal{H}_M}^{\mathcal{H}_G}} \modn\mathcal{H}_G(L,X) }\end{gather*}
between the categories of right modules. The purpose of this section is to compare the Alvis--Curtis duality functor $\mathsf{D}_G$ for the group with a similar functor $\mathsf{D}_\mathcal{H}$ of the Hecke algebra. From now on we shall f\/ix the cuspidal pair $(L,X)$, and we will denote simply $\mathcal{H}_G$ and $\mathcal{H}_M$ the endomorphism algebras of $\HC_L^G(X)$ and $\HC_L^M(X)$ respectively.

Let $Y$ be a (non-necessarily cuspidal) $\Lambda M$-module. We consider the natural transforma\-tion~$\Theta_{M,Y}$ def\/ined so that the following diagram commutes:
\begin{gather*}\xymatrix{ \operatorname{Hom}_M(Y,\rHC_M^G(-))\otimes_{\operatorname{End}_M(Y)} \operatorname{Hom}_M(\rHC_M^G \HC_M^G (Y),Y) \ar[r]^{\qquad \qquad \ \ \text{mult}} \ar[d]^\sim& \operatorname{Hom}_M(\rHC_M^G \HC_M^G(Y),\rHC_M^G(-)) \ar[d]^\sim \\
 \operatorname{Hom}_G(\HC_M^G(Y),-) \otimes_{\operatorname{End}_M(Y)} \operatorname{End}_G(\HC_M^G(Y))
 \ar[r]^{\quad \qquad \Theta_{M,Y}} & \operatorname{Hom}_G(\HC_M^G(Y), \HC_M^G \rHC_M^G(-)).}\end{gather*}
In other words, $\Theta_{M,Y}$ is def\/ined on the objects by
\begin{gather*}\Theta_{M,Y}(f\otimes h) = \HC_M^G \rHC_M^G(f) \circ \HC_M^G(\underline{\eta}_{Y})\circ h.\end{gather*}
Using this description one can check that $\Theta_{M,Y}$ is compatible with the right action of \linebreak $\operatorname{End}_G(\HC_M^G(Y))$. Therefore it is a well-def\/ined natural transformation between functors from $\Lambda G\nmod$ to $\modn\operatorname{End}_G(\HC_M^G(Y))$.

Let $\mathbf{N}$ be another standard $F$-stable Levi subgroup of $\mathbf{G}$ with $\mathbf{M} \subset \mathbf{N}$. Recall from Section~\ref{sec:hcindres} the natural transformation $\varphi^{M\subset N \subset G}\colon \HC_M^G \rHC_M^G \longrightarrow \HC_N^G \rHC_N^G$ which was needed for the construction of $\mathsf{D}_G$. By composition it induces a natural transformation
\begin{gather*}\operatorname{Hom}_G\big(\HC_M^G(Y), \HC_M^G \rHC_M^G(-)\big)
\longrightarrow \operatorname{Hom}_G\big(\HC_M^G(Y), \HC_N^G \rHC_N^G(-)\big).\end{gather*}

\begin{Proposition}\label{prop:compatibilityarrows}
The following diagram is commutative
$$\text{\footnotesize $\xymatrix@!C=8cm{ \operatorname{Hom}_G\big(\HC_M^G(Y),-\big) \otimes_{\operatorname{End}_M(Y)}
 \operatorname{End}_G\big(\HC_M^G(Y)\big)
 \ar[r]^{\qquad\quad\Theta_{M,Y}} \ar[d] & \operatorname{Hom}_G\big(\HC_M^G(Y), \HC_M^G \rHC_M^G(-)\big) \ar[d] \\
\operatorname{Hom}_G\big(\HC_M^G(Y),-\big) \otimes_{\operatorname{End}_N\big(\HC_M^N(Y)\big)} \operatorname{End}_G\big(\HC_M^G(Y)\big)
 \ar[d]^{\sim} & \operatorname{Hom}_G\big(\HC_M^G(Y), \HC_N^G \rHC_N^G(-)\big) \ar[d]^\sim\\
 \operatorname{Hom}_G\big(\HC_N^G(\HC_M^N(Y)),-\big) \otimes_{\operatorname{End}_N\big(\HC_M^N(Y)\big)}\operatorname{End}_G\big(\HC_N^G\big(\HC_M^N(Y)\big)\big)
 \ar[r]^{\qquad\quad \qquad \Theta_{N,\HC_M^N(Y)}} & \operatorname{Hom}_G\big(\HC_N^G\big(\HC_M^N(Y)\big), \HC_N^G \rHC_N^G(-)\big).}$
 }$$
\end{Proposition}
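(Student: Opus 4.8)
The plan is to unwind both composites on the level of objects using the explicit formula $\Theta_{M,Y}(f\otimes h) = \HC_M^G\rHC_M^G(f)\circ\HC_M^G(\underline{\eta}_Y)\circ h$, and to reduce the commutativity of the outer square to the compatibility relation $\varphi^{M\subset N\subset G}\circ\varphi^{L\subset M\subset G}=\varphi^{L\subset N\subset G}$ recalled at the end of Section~\ref{sec:hcindres} (applied with $L=M$, i.e.\ to the chain $\mathbf{M}\subset\mathbf{N}\subset\mathbf{G}$ together with the trivial step), combined with the transitivity isomorphism $\HC_N^G\circ\HC_M^N\simeq\HC_M^G$ and the corresponding compatibility of the units $\underline{\eta}$. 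First I would fix an element $f\otimes h$ in the top-left corner, push it right via $\Theta_{M,Y}$ and then down via $\varphi^{M\subset N\subset G}$; this gives $\varphi^{M\subset N\subset G}\circ\HC_M^G\rHC_M^G(f)\circ\HC_M^G(\underline{\eta}^{M\subset G}_Y)\circ h$. Then I would push the same element down first (rewriting $\HC_M^G(Y)$ as $\HC_N^G(\HC_M^N(Y))$ and $\operatorname{End}_M(Y)$-actions as $\operatorname{End}_N(\HC_M^N(Y))$-actions via the fully-faithful functor $\HC_M^N$) and then right via $\Theta_{N,\HC_M^N(Y)}$, obtaining $\HC_N^G\rHC_N^G(f)\circ\HC_N^G(\underline{\eta}^{N\subset G}_{\HC_M^N(Y)})\circ h'$, where $h'$ is the image of $h$ under the bottom-left vertical identification.

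The heart of the argument is then a diagram chase showing these two natural transformations agree. Naturality of $\varphi^{M\subset N\subset G}$ with respect to the morphism $f$ lets one slide it past $\HC_M^G\rHC_M^G(f)$, turning the first expression into $\HC_N^G\rHC_N^G(f)\circ\varphi^{M\subset N\subset G}_{\HC_M^G(Y)}\circ\HC_M^G(\underline{\eta}^{M\subset G}_Y)\circ h$. It then remains to identify
\begin{gather*}
\varphi^{M\subset N\subset G}_{\HC_M^G(Y)}\circ\HC_M^G\big(\underline{\eta}^{M\subset G}_Y\big)
\quad\text{with}\quad
\HC_N^G\big(\underline{\eta}^{N\subset G}_{\HC_M^N(Y)}\big)
\end{gather*}
after transporting along $\HC_N^G\HC_M^N\simeq\HC_M^G$. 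Recalling that $\varphi^{M\subset N\subset G}$ is by definition built from the counit $\underline{\varepsilon}^{M\subset N}$ together with the transitivity isomorphisms, this boils down to the standard triangle/zig-zag identities relating the units and counits of the composable adjunctions $(\HC_M^N,\rHC_M^N)$ and $(\HC_N^G,\rHC_N^G)$, i.e.\ to the fact that the composite adjunction's unit is $\rHC_M^N(\underline{\eta}^{N\subset G}_{\HC_N^G-})\circ\underline{\eta}^{M\subset N}$. This is exactly the kind of compatibility that the cited references \cite[Section~4]{CaEn} and \cite[Section~III.7.2]{DS} are set up to provide, so it should be invoked rather than re-derived.

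The main obstacle I anticipate is purely bookkeeping: keeping track of where the transitivity isomorphism $\HC_N^G\HC_M^N\simeq\HC_M^G$ is inserted and checking that all identifications are performed coherently, in particular that the two left-hand vertical maps (the change of base ring $\operatorname{End}_M(Y)\to\operatorname{End}_N(\HC_M^N(Y))$ induced by fully-faithfulness, followed by the identification of Hom-spaces) are compatible with the multiplication maps used to define $\Theta_{M,Y}$ and $\Theta_{N,\HC_M^N(Y)}$ in the defining commutative squares. Once one commits to a fixed choice of the coherent transitivity isomorphisms — as guaranteed by the sentence ``the natural isomorphism giving the transitivity $\dots$ can be chosen so that $\varphi^{M\subset N\subset G}\circ\varphi^{L\subset M\subset G}=\varphi^{L\subset N\subset G}$'' — the chase goes through. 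Since everything in sight is additive and natural, it suffices to verify the identity on objects $f\otimes h$ as above, and no further finiteness or semisimplicity hypotheses enter.
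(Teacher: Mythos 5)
Your proposal is correct and follows essentially the same route as the paper: evaluate both composites on a generator $f\otimes h$, use naturality of $\varphi^{M\subset N\subset G}$ to slide it past $\HC_M^G\rHC_M^G(f)$, and then reduce to the identity $\varphi^{M\subset N\subset G}_{\HC_M^G(Y)}\circ\HC_M^G(\underline{\eta}^{M\subset G}_Y)=\HC_N^G\rHC_N^G(t_Y^{-1})\circ\HC_N^G(\underline{\eta}^{N\subset G}_{\HC_M^N(Y)})\circ t_Y$, whose only non-trivial input is the compatibility of the units of the composable adjunctions with the transitivity isomorphisms $t$, $t^*$, exactly as you identify via the zig-zag identities. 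One small inaccuracy: invoking $\varphi^{M\subset N\subset G}\circ\varphi^{L\subset M\subset G}=\varphi^{L\subset N\subset G}$ with $L=M$ is vacuous (it just says $\varphi\circ\operatorname{id}=\varphi$); what is actually used, and what you correctly pivot to in the second paragraph, is the coherence relation $(\rHC_M^N\cdot\underline{\eta}^{N\subset G}\cdot\HC_M^N)\circ\underline{\eta}^{M\subset N}=(t^*\cdot t)\circ\underline{\eta}^{M\subset G}$ built into the chosen transitivity data.
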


\begin{proof} Since $\HC_M^N$ is faithful one can see $\operatorname{End}_M(Y)$ as a subalgebra of $\operatorname{End}_N(\HC_M^N(Y))$. Then the f\/irst vertical map on the left-hand side of the diagram is the canonical projection. We will write $t\colon \HC_M^G\mathop{\longrightarrow}\limits^\sim \HC_N^G \HC_M^N $ (resp.\ $t^*\colon \rHC_M^G\mathop{\longrightarrow}\limits^\sim \rHC_M^N \rHC_N^G $) for the isomorphism of functors coming from the transitivity of Harish-Chandra induction (resp.\ restriction).

Let $Z$ be a $\Lambda G$-module, $h \in \operatorname{End}(\HC_M^G(Y))$ and $f \in \operatorname{Hom}_G(\HC_M^G(Y),Z)$. The commutativity of the diagram is equivalent to the relation
\begin{gather*}
 \HC_N^G \rHC_N^G\big(f \circ t_Y^{-1}\big) \circ \HC_N^G\big(\big(\underline{\eta}^{N\subset G}\big)_{\HC_M^N(Y)}\big)\circ \big( t_Y \circ h \circ t_Y^{-1}\big) \\
 \qquad{} = \big(\varphi^{M\subset N \subset G}\big)_{Z} \circ \HC_M^G \rHC_M^G(f) \circ \HC_M^G\big(\big(\underline{\eta}^{M\subset G}\big)_{Y}\big)\circ h \circ t_Y^{-1}. \end{gather*}
Since $\big(\varphi^{M\subset N \subset G}\big)_{Z} \circ \HC_M^G \rHC_M^G(f) = \HC_N^G \rHC_N^G(f) \circ \big(\varphi^{M\subset N \subset G}\big)_{\HC_M^G(Y)}$ it is enough to show that
\begin{gather}\label{eq:commute}
 \big(\varphi^{M\subset N \subset G}\big)_{\HC_M^G(Y)} \circ \HC_M^G\big(\big(\underline{\eta}^{M\subset G}\big)_{Y}\big) = \HC_N^G\rHC_N^G\big(t_Y^{-1}\big) \circ \HC_N^G\big(\big(\underline{\eta}^{N\subset G}\big)_{\HC_M^N(Y)}\big)\circ t_Y.
 \end{gather}
This comes from the following commutative diagram
\begin{gather*}\xymatrix{
\HC_M^G \rHC_M^G \HC_M^G \ar[r]^t \ar@/^2pc/[rrr]^{\varphi^{M\subset N \subset G}}& \HC_N^G \HC_M^N \rHC_M^G \HC_M^G
\ar[r]^{t^*} & \HC_N^G \HC_M^N \rHC_M^N \rHC_N^G \HC_M^G \ar[r]^{\hskip 5mm \underline{\varepsilon}^{M\subset N}} \ar[d]^t & \fbox{$\HC_N^G \rHC_N^G \HC_M^G$} \\
\fbox{$\HC_M^G$} \ar[u]_{\underline{\eta}^{M\subset G}} \ar[r]^t& \HC_N^G\HC_M^N \ar@/_4pc/@{=}[rrd] \ar[u]_{\underline{\eta}^{M\subset G}} \ar[rd]_{\underline{\eta}^{M\subset N}} &\HC_N^G \HC_M^N \rHC_M^N \rHC_N^G \HC_N^G
\HC_M^N \ar[r]^{\hskip 8mm \underline{\varepsilon}^{M\subset N}} & \ar[u]_{t^{-1}} \HC_N^G \rHC_N^G \HC_N^G \HC_M^N \\
& & \HC_N^G \HC_M^N \rHC_M^N \HC_M^N \ar[r]_{\underline{\varepsilon}^{M\subset N}} \ar[u]_{\underline{\eta}^{N\subset G}} & \HC_N^G \HC_M^N. \ar[u]_{\underline{\eta}^{N\subset G}}}
 \end{gather*}

\vspace{5mm}

\noindent
where for simplicity we did not write the identity natural transformations. The only non-trivial commutative subdiagram is the central one, which comes from the relation
\begin{gather*} \big(\rHC_M^N \cdot \underline{\eta}^{N\subset G}\cdot \HC_M^N\big) \circ \underline{\eta}^{M\subset N} = (t^* \cdot t) \circ \underline{\eta}^{N\subset G},\end{gather*}
which we assume to hold by our choice of $t$ and $t^*$. For more on the compatibility of the unit, counit, $t$ and $t^*$ see \cite[Part~3]{DS}. Now the relation~\eqref{eq:commute} comes from the equality between two natural transformations between the functors $\HC_M^G$ and $\HC_N^G \rHC_N^G \HC_M^G$, given by the top and the bottom arrows respectively.
\end{proof}

In the particular case where $Y = \HC_L^G(X)$ with $(L,X)$ being a cuspidal pair, the natural transformation $\Theta_{M,\HC_L^M(X)}$ becomes
\begin{gather*} \operatorname{Ind}_{\mathcal{H}_M}^{\mathcal{H}_G} \operatorname{Res}_{\mathcal{H}_M}^{\mathcal{H}_G}\big( \operatorname{Hom}_G(\HC_L^G(X), - )\big) \xrightarrow{\Theta_{M,\HC_L^M(X)}} \operatorname{Hom}_G(\HC_L^G(X), \HC_M^G \rHC_M^G(-)). \end{gather*}
It can be seen as a way to intertwine the endofunctor $\operatorname{Ind}_{\mathcal{H}_M}^{\mathcal{H}_G} \operatorname{Res}_{\mathcal{H}_M}^{\mathcal{H}_G}$ of $\modn\mathcal{H}_G$ and the endofunctor $\HC_M^G \rHC_M^G$ of $\Lambda G\nmod$ via the functor $\operatorname{Hom}_G(\HC_L^G(X), - )$, as shown in the following diagram
\begin{gather*}\xymatrix@!C=1.5cm{kG\nmod \ar[rr]^{\operatorname{Hom}_G(\HC_L^G(X), - )} \ar[dd]_{\HC_M^G \rHC_M^G}
 & & \modn\mathcal{H}_G \ar[dd]^{\operatorname{Ind}_{\mathcal{H}_M}^{\mathcal{H}_G} \operatorname{Res}_{\mathcal{H}_M}^{\mathcal{H}_G}} \\
 & \rotatebox{35}{$\Longleftarrow$} \\
 kG\nmod \ar[rr]^{\operatorname{Hom}_G(\HC_L^G(X), - )} & & \modn\mathcal{H}_G.}\end{gather*}
In this diagram the central double arrow represents the natural transformation $\Theta_{M,\HC_L^M(X)}$. We give a condition for this transformation to be an isomorphism. Note that similar results were obtained by Dipper--Du in~\cite[Theorem~1.3.2]{DipDu} in the case of general linear groups and by Seeber~\cite{See} with coinduction instead of induction.

\begin{Proposition}\label{prop:compatibilityindred}
Assume that $\Lambda$ is a field. Let $(L,X)$ be a cuspidal pair of $G$, and $\mathbf{M}$ be a~standard Levi of $\mathbf{G}$ containing $\mathbf{L}$. Assume that any cuspidal pair of $M$ which is $G$-conjugate to $(L,X)$ is actually $M$-conjugate to $(L,X)$. Then $\Theta_{M,\HC_L^M(X)}$ is an isomorphism.
\end{Proposition}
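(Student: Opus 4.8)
The plan is to evaluate the natural transformation $\Theta_{M,\HC_L^M(X)}$ on each $\Lambda G$-module $Z$ and show it is an isomorphism of right $\mathcal{H}_G$-modules. Since by transitivity $\HC_M^G\rHC_M^G \simeq \HC_M^G\HC_L^M\rHC_L^M\rHC_M^G \simeq \HC_L^G\rHC_L^G$ (using that $\rHC_L^M\rHC_M^G \simeq \rHC_L^G$ and its adjoint), the target functor $\operatorname{Hom}_G(\HC_L^G(X),\HC_M^G\rHC_M^G(-))$ is naturally isomorphic to $\operatorname{Hom}_G(\HC_L^G(X),\HC_L^G\rHC_L^G(-))$, which by fully-faithfulness of $\HC_L^G$ is $\operatorname{Hom}_L(X,\rHC_L^G(-))$ composed with $\HC_L^G\rHC_L^G$ — unwinding, on a module $Z$ this is $\operatorname{Hom}_L(X,\rHC_L^G(Z))$ tensored up appropriately. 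The source functor is $\operatorname{Ind}_{\mathcal{H}_M}^{\mathcal{H}_G}\operatorname{Res}_{\mathcal{H}_M}^{\mathcal{H}_G}$ applied to $\operatorname{Hom}_G(\HC_L^G(X),Z)$. So the claim reduces to an identity between, on the one hand, inducing and restricting a Hecke-algebra module along $\mathcal{H}_M \subset \mathcal{H}_G$, and on the other hand, the effect of $\HC_M^G\rHC_M^G$ read through the Hom functor.

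First I would reduce to the case $Z = \HC_L^G(X) = Y$ itself, or more precisely to checking the statement on the regular module: because all functors involved are additive and $\operatorname{Hom}_G(\HC_L^G(X),-)$ sends $\HC_L^G(X)$ to $\mathcal{H}_G$, and because both sides commute with arbitrary direct sums (Harish-Chandra functors are exact and the Hecke algebra is finite-dimensional), it suffices to verify that $\Theta_{M,\HC_L^M(X)}$ is an isomorphism after applying the functors to a module that generates — concretely, one checks it is an isomorphism of functors by checking on projective generators, and ultimately the comparison becomes an isomorphism of $(\mathcal{H}_G,\mathcal{H}_G)$-bimodules $\mathcal{H}_G \otimes_{\mathcal{H}_M} \mathcal{H}_G \simeq \operatorname{End}_G(\HC_M^G\rHC_M^G \HC_L^G(X))$ or the analogous Hom-space. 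The key input is the Mackey formula: $\rHC_M^G\HC_L^G(X)$ decomposes as a direct sum indexed by double cosets $W_M \backslash W / W_L$ (or rather the relevant parabolic double cosets), with summands $\HC_{{}^w L \cap M}^M(\phantom{}^w X)$. Under the hypothesis that every $G$-conjugate cuspidal pair of $M$ to $(L,X)$ is already $M$-conjugate to it, the cuspidal summands of $\rHC_M^G\HC_L^G(X)$ contributing to $\operatorname{Hom}$ are exactly the $M$-conjugates of $(L,X)$, i.e. the sum is "controlled" in the same way the double-coset decomposition of $\mathcal{H}_G$ over $\mathcal{H}_M$ is — giving a dimension/indexing match between $\mathcal{H}_G \otimes_{\mathcal{H}_M} \mathcal{H}_G$ and $\operatorname{Hom}_G(\HC_L^G(X), \HC_M^G\rHC_M^G\HC_L^G(X))$.

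The argument then proceeds by unwinding $\Theta_{M,\HC_L^M(X)}$ on this bimodule explicitly using its formula $\Theta_{M,Y}(f\otimes h) = \HC_M^G\rHC_M^G(f)\circ\HC_M^G(\underline{\eta}_Y)\circ h$, together with the adjunction identities and the transitivity isomorphisms $t$, $t^*$ whose compatibility was fixed in Proposition~\ref{prop:compatibilityarrows}. Concretely: an element of $\mathcal{H}_G \otimes_{\mathcal{H}_M}\mathcal{H}_G$ is sent to an endomorphism built from a unit map followed by $\HC_M^G\rHC_M^G$ of a group homomorphism; one shows that the induced map $\operatorname{Ind}\operatorname{Res}(\mathcal{H}_G) \to \operatorname{Hom}_G(\HC_L^G(X),\HC_M^G\rHC_M^G\HC_L^G(X))$ is surjective (every Hom factors through the appropriate Mackey summand, which under the hypothesis is hit by an induced element) and injective (a dimension count, or a direct kernel computation using cuspidality, since a cuspidal module has no proper Harish-Chandra sub- or quotient, so the relevant Hom-spaces between distinct conjugates vanish). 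Injectivity plus the dimension match from the Mackey formula forces bijectivity.

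The main obstacle I anticipate is the bookkeeping in the Mackey-formula step: one must match, term by term, the parabolic double cosets $W_M\backslash W_G / W_L$ governing the decomposition of $\rHC_M^G\HC_L^G(X)$ with a basis of $\mathcal{H}_G$ as a right $\mathcal{H}_M$-module, and use the hypothesis precisely to discard the "bad" double cosets (those $w$ for which $\phantom{}^w(L,X)$ is $G$- but not $M$-conjugate to a cuspidal pair inside $M$, which would otherwise contribute extra — in fact \emph{zero} by cuspidality — terms that break the clean indexing). Getting the natural transformation $\Theta$ to respect this identification on the nose, rather than merely up to an automorphism, is where the carefully-chosen compatible isomorphisms $t$, $t^*$, $\underline\eta$, $\underline\varepsilon$ from Section~\ref{sec:hcindres} and Proposition~\ref{prop:compatibilityarrows} do the real work; everything else is formal adjunction-chasing.
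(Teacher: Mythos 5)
You have correctly isolated the two essential ingredients (the Mackey decomposition of $\rHC_M^G\HC_L^G(X)$ and the use of the hypothesis to force every cuspidal summand to be $M$-conjugate to $(L,X)$), but the route you take to exploit them has genuine gaps. First, the opening identification $\HC_M^G\rHC_M^G\simeq\HC_M^G\HC_L^M\rHC_L^M\rHC_M^G\simeq\HC_L^G\rHC_L^G$ is false: $\HC_L^M\rHC_L^M$ is not the identity functor on $\Lambda M$-modules, so the target functor is not $\operatorname{Hom}_G(\HC_L^G(X),\HC_L^G\rHC_L^G(-))$. Second, the reduction to the regular module (or to projective generators) is not available: $\HC_L^G(X)$ need not be a projective $\Lambda G$-module, so $\operatorname{Hom}_G(\HC_L^G(X),-)$ is only left exact, and $-\otimes_{\mathcal{H}_M}\mathcal{H}_G$ is not exact unless $\mathcal{H}_G$ is flat over $\mathcal{H}_M$ --- an assumption that enters only in Theorem~\ref{thm:main}, not here; a natural transformation between functors with no common exactness property cannot be tested on a single generator and then propagated. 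Third, the concluding dimension count comparing $\mathcal{H}_G\otimes_{\mathcal{H}_M}\mathcal{H}_G$ with a Hom-space presupposes that $\mathcal{H}_G$ is free over $\mathcal{H}_M$ with basis matched to the Mackey double cosets, which again is not among the hypotheses of this proposition.

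The intended argument is shorter and works for every module $Z$ simultaneously, with no exactness, projectivity, or dimension count. In the diagram defining $\Theta_{M,Y}$ the two vertical maps are adjunction isomorphisms, so $\Theta_{M,\HC_L^M(X)}$ is an isomorphism if and only if the multiplication map
\begin{gather*}
\operatorname{Hom}_M\big(\HC_L^M(X),\rHC_M^G(Z)\big)\otimes_{\operatorname{End}_M(\HC_L^M(X))}\operatorname{Hom}_M\big(\rHC_M^G\HC_L^G(X),\HC_L^M(X)\big)\longrightarrow\operatorname{Hom}_M\big(\rHC_M^G\HC_L^G(X),\rHC_M^G(Z)\big)
\end{gather*}
is one. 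By the Mackey formula $\rHC_M^G\HC_L^G(X)\simeq\bigoplus_x\HC_{{}^xL}^M(X^x)$ over the double cosets with ${}^x\mathbf{L}\subset\mathbf{M}$, and the hypothesis gives $\HC_{{}^xL}^M(X^x)\simeq\HC_L^M(X)$ for each such summand. Writing $P=\HC_L^M(X)$, each resulting component is of the form $\operatorname{Hom}(P,Z')\otimes_{\operatorname{End}(P)}\operatorname{Hom}(P',P)\to\operatorname{Hom}(P',Z')$ with $P'\cong P$; since $\operatorname{Hom}(P',P)$ is then free of rank one over $\operatorname{End}(P)$ on any fixed isomorphism, this map is bijective for every $Z'$. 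If you wish to keep your global bimodule strategy you would have to add freeness of $\mathcal{H}_G$ over $\mathcal{H}_M$ as an extra hypothesis and justify the descent from the regular module; the pointwise argument above avoids both issues.
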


\begin{proof}
By def\/inition of $\Theta_{M,\HC_L^M(X)}$ it is enough to show that the multiplication map
\begin{gather*}
\xymatrix@!C=1.5cm{\operatorname{Hom}_M\big(\HC_L^M(X),\rHC_M^G(-)\big) \otimes_{\operatorname{End}_M\big(\HC_L^M(X)\big)} \operatorname{Hom}_M\big(\rHC_M^G \HC_L^G (X),\HC_L^M(X)\big) \ar[d] \\ \operatorname{Hom}_M\big(\rHC_M^G \HC_L^G(X),\rHC_M^G(-)\big)}
\end{gather*}
is an isomorphism of $k$-vector spaces. Now by the Mackey formula \cite[Proposition~1.5]{CaEn} we have
\begin{gather*} \rHC_M^G \HC_L^G (X) \simeq \bigoplus_{\begin{subarray}{c} x \in Q\backslash G/P \\ {}^x \mathbf{L} \subset \mathbf{M} \end{subarray}} \HC_{{}^x L}^M(X^x).\end{gather*}
Under the assumption on $(L,X)$, any cuspidal pair $({}^xL,X^x)$ with ${}^xL \subset M$ is conjugate to $(L,X)$ under $M$. In particular $\HC_{{}^x L}^M(X^x) \simeq \HC_L^M(X)$ and we deduce that each of the composition maps
\begin{gather*}\xymatrix@!C=1.5cm{\operatorname{Hom}_M\big(\HC_L^M(X),\rHC_M^G(-)\big) \otimes_{\operatorname{End}_M\big(\HC_L^M(X)\big)} \operatorname{Hom}_M\big(\HC_{{}^x L}^M(X^x),\HC_L^M(X)\big)
 \ar[d] \\ \operatorname{Hom}_M\big(\HC_{{}^x L}^M(X^x),\rHC_M^G(-)\big)}\end{gather*}
is an isomorphism.
\end{proof}

As in the case of the f\/inite group $G$ we can form a complex of functors coming from induction and restriction in $\mathcal{H}_G$. Given $r \geq 0$, we denote by $\mathcal{L}_r(\mathbf{L})$ the subset of $\mathcal{L}_r$ of standard Levi subgroups containing $\mathbf{L}$. The complex of functors
\begin{gather*} 0 \longrightarrow \operatorname{Ind}_{\mathcal{H}_L}^{\mathcal{H}_G} \operatorname{Res}_{\mathcal{H}_L}^{\mathcal{H}_G} \longrightarrow \cdots \longrightarrow
\bigoplus_{\mathbf{M} \in \mathcal{L}_1(\mathbf{L})}\operatorname{Ind}_{\mathcal{H}_M}^{\mathcal{H}_G} \operatorname{Res}_{\mathcal{H}_M}^{\mathcal{H}_G} \longrightarrow \operatorname{Id} \longrightarrow 0,\end{gather*}
where $\operatorname{Id}$ is in degree $0$, induces a triangulated functor $\mathsf{D}_{\mathcal{H}_G}$ in $D^b(\modn\mathcal{H}_G)$ whenever each term of the complex is exact. For that property to hold we need to assume that $\mathcal{H}_G$ is f\/lat over each subalgebra of the form $\mathcal{H}_M$. Combining Propositions \ref{prop:compatibilityarrows} and \ref{prop:compatibilityindred} we get

\begin{Theorem}\label{thm:main}
Assume that $\Lambda$ is a field. Let $\mathbf{L}$ be an $F$-stable standard Levi subgroup of $\mathbf{G}$ and~$X$ be a cuspidal $\Lambda L$-module. Let $\mathcal{H}_G = \operatorname{End}_G(\HC_L^G(X))$. Assume that for every $F$-stable standard Levi subgroup $\mathbf{M}$ of $\mathbf{G}$ containing $\mathbf{L}$ we have:
\begin{itemize}\itemsep=0pt
\item[$(i)$] $\mathcal{H}_G$ is flat over $\mathcal{H}_M = \operatorname{End}_M\big(\HC_L^M(X)\big)$.
\item[$(ii)$] Every cuspidal pair of $M$ which is $G$-conjugate to $(L,X)$ is actually $M$-conjugate to $(L,X)$.
\end{itemize}
Then there is a natural isomorphism of endofunctors of $D^b(\modn\mathcal{H}_G)$
\begin{gather*}\mathsf{D}_{\mathcal{H}_G}\big(\mathrm{RHom}_G\big(\HC_L^G(X),-\big)\big) \simto \mathrm{RHom}_G\big(\HC_L^G(X),\mathsf{D}_G(-)\big). \end{gather*}
\end{Theorem}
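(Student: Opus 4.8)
The plan is to build the isomorphism out of the arrow-level isomorphism $\Theta_{M,\HC_L^M(X)}$ from Proposition \ref{prop:compatibilityindred}, assembled along the complex defining $\mathsf{D}_G$, with Proposition \ref{prop:compatibilityarrows} providing the compatibility that makes the assembly a morphism of complexes. First I would record that, since $\Lambda$ is a field and Harish-Chandra induction and restriction are exact, passing to derived categories is harmless: $\mathrm{RHom}_G(\HC_L^G(X),-)$ is just the functor $\operatorname{Hom}_G(\HC_L^G(X),-)$ applied termwise, and both $\mathsf{D}_G$ and $\mathsf{D}_{\mathcal{H}_G}$ are the total complexes of the respective complexes of exact functors (the latter being a complex of exact functors precisely by hypothesis~(i)). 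So it suffices to produce a natural isomorphism of complexes of functors from $\Lambda G\nmod$ to $\modn\mathcal{H}_G$
\begin{gather*}
\operatorname{Hom}_G\big(\HC_L^G(X),-\big)\circ \big(\text{complex defining }\mathsf{D}_G\big) \;\simeq\; \big(\text{complex defining }\mathsf{D}_{\mathcal{H}_G}\big)\circ \operatorname{Hom}_G\big(\HC_L^G(X),-\big),
\end{gather*}
and then take total complexes.

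Next I would identify the two complexes term by term. The degree-$(-r)$ term of the group-side complex is $\bigoplus_{\mathbf{M}\in\mathcal{L}_r}\HC_M^G\rHC_M^G$; applying $\operatorname{Hom}_G(\HC_L^G(X),-)$ and using the adjunction together with $\HC_M^G\circ\HC_L^M\simeq\HC_L^G$, the summand indexed by $\mathbf{M}$ contributes $\operatorname{Hom}_G(\HC_L^G(X),\HC_M^G\rHC_M^G(-))$, which by Proposition \ref{prop:compatibilityindred} — whose hypothesis is exactly~(ii) — is isomorphic via $\Theta_{M,\HC_L^M(X)}$ to $\operatorname{Ind}_{\mathcal{H}_M}^{\mathcal{H}_G}\operatorname{Res}_{\mathcal{H}_M}^{\mathcal{H}_G}\big(\operatorname{Hom}_G(\HC_L^G(X),-)\big)$, for those $\mathbf{M}$ that contain (a $G$-conjugate of, hence by (ii) an $M$-conjugate of) $\mathbf{L}$. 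For $\mathbf{M}$ not containing a $G$-conjugate of $\mathbf{L}$, the Mackey formula makes $\rHC_M^G\HC_L^G(X)$ vanish, so $\operatorname{Hom}_G(\HC_L^G(X),\HC_M^G\rHC_M^G(-))=0$; this is exactly why the group-side complex, after applying $\operatorname{Hom}_G(\HC_L^G(X),-)$, collapses onto the summands indexed by $\mathcal{L}_r(\mathbf{L})$, matching the Hecke-side complex term by term. I would spell this out in each degree, including degree $0$ where both sides give the identity functor.

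The heart of the argument is checking that these termwise isomorphisms commute with the differentials. The differential in the group-side complex is built from the natural transformations $\varphi^{M\subset N\subset G}$ (plus signs from the combinatorics of subsets of $\Delta/F$), and the differential in the Hecke-side complex is built from the analogous transitivity/counit maps $\operatorname{Ind}\operatorname{Res}$ relative to the chain $\mathcal{H}_M\subset\mathcal{H}_N\subset\mathcal{H}_G$. Proposition \ref{prop:compatibilityarrows} is precisely the statement that, for a single inclusion $\mathbf{M}\subset\mathbf{N}$, the square formed by $\Theta_{M,\HC_L^M(X)}$, $\Theta_{N,\HC_L^N(X)}$, the map induced by $\varphi^{M\subset N\subset G}$, and the canonical comparison between $\operatorname{Ind}_{\mathcal{H}_M}^{\mathcal{H}_G}\operatorname{Res}_{\mathcal{H}_M}^{\mathcal{H}_G}$ and $\operatorname{Ind}_{\mathcal{H}_N}^{\mathcal{H}_G}\operatorname{Res}_{\mathcal{H}_N}^{\mathcal{H}_G}$ commutes. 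So the individual square for each pair $(\mathbf{M},\mathbf{N})$ commutes on the nose; summing over all such pairs with the sign conventions fixed once and for all in the construction of both complexes (the same combinatorial signs appear on both sides, so they match automatically) shows the two differentials agree under the termwise isomorphism. The naturality in the $\Lambda G$-module argument is inherited from the naturality of each $\Theta$, of $\varphi$, and of the adjunction units and counits.

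The main obstacle I anticipate is bookkeeping rather than conceptual: one must fix compatible choices of the transitivity isomorphisms $t$, $t^*$ for Harish-Chandra induction and restriction (as in \cite[Part~3]{DS}) so that the coherence condition used in the proof of Proposition \ref{prop:compatibilityarrows} holds simultaneously for every chain of standard Levi subgroups between $\mathbf{L}$ and $\mathbf{G}$, and so that the signs in the two complexes are normalized identically; with those choices in place the verification that the termwise isomorphisms form a chain map reduces exactly to Proposition \ref{prop:compatibilityarrows} applied one edge at a time. Once the morphism of complexes is established and shown to be a termwise isomorphism, passing to total complexes yields the asserted natural isomorphism of endofunctors of $D^b(\modn\mathcal{H}_G)$.
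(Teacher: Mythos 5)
Your proposal follows exactly the route the paper intends: the paper's own justification is the single phrase ``combining Propositions \ref{prop:compatibilityarrows} and \ref{prop:compatibilityindred}'', and your assembly of the termwise isomorphisms $\Theta_{M,\HC_L^M(X)}$ into a morphism of complexes --- with Proposition \ref{prop:compatibilityarrows} supplying commutation with the differentials, hypothesis $(i)$ guaranteeing exactness of the Hecke-side terms, and the coherence of $t$, $t^*$ and the signs handled once and for all --- is precisely that combination written out.

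The one step where your argument (and the paper's) is too quick is the claimed collapse of $\bigoplus_{\mathbf{M}\in\mathcal{L}_r}\operatorname{Hom}_G\big(\HC_L^G(X),\HC_M^G\rHC_M^G(-)\big)$ onto the summands indexed by $\mathcal{L}_r(\mathbf{L})$. The Mackey formula kills the summand attached to $\mathbf{M}$ only when $\mathbf{M}$ contains no $G$-conjugate of $\mathbf{L}$. But a standard Levi subgroup $\mathbf{M}$ can contain a $G$-conjugate ${}^x\mathbf{L}$ without containing $\mathbf{L}$ itself (already $\mathbf{L}=\mathrm{GL}_2\times\mathrm{GL}_1$ and $\mathbf{M}=\mathrm{GL}_1\times\mathrm{GL}_2$ inside $\mathrm{GL}_3$), and hypothesis $(ii)$ --- which quantifies only over the $\mathbf{M}$ containing $\mathbf{L}$ --- does not rule this out. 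For such $\mathbf{M}$ the module $\rHC_M^G\HC_L^G(X)$ is nonzero, so the corresponding term survives yet has no counterpart in the complex defining $\mathsf{D}_{\mathcal{H}_G}$, and the termwise identification breaks down. One therefore needs the additional, implicit, assumption that every standard Levi subgroup of $\mathbf{G}$ containing a $G$-conjugate of $\mathbf{L}$ already contains $\mathbf{L}$ (equivalently, one should group the summands over conjugacy classes of such Levi subgroups and check that the multiplicities match). This extra condition does hold for the groups $\mathbf{G}_\mathbf{m}$ to which the theorem is applied in Section~\ref{section3}, but it is not a formal consequence of $(i)$ and $(ii)$; since the defect is inherited from the statement rather than introduced by your argument, the rest of your proof stands as written.
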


\section[Unipotent representations of $\mathrm{GL}_n(q)$]{Unipotent representations of $\boldsymbol{\mathrm{GL}_n(q)}$}\label{section3}

In this section we show how to use Theorem \ref{thm:main} to compute $d_{\mathrm{GL}_n(q)}(S)$ for every unipotent simple $k\mathrm{GL}_n(q)$-module $S$. This will involve an involution on the set of partitions of $n$ generalizing the Mullineux involution~\cite{Mu}. So here $\mathbf{G} = \mathrm{GL}_n(\overline{\mathbb{F}}_p)$ will be the general linear group over an algebraic closure of $\mathbb{F}_p$, and $F\colon \mathbf{G} \longrightarrow \mathbf{G}$ the standard Frobenius endomorphism, raising the entries of a matrix to the $q$-th power.

\subsection{Partitions}\label{sec:partitions}
A \emph{partition} $\lambda$ of $n\in \mathbb{N}$ is a non-increasing sequence $(\lambda_1 \geq \lambda_2 \geq \cdots \geq \lambda_r)$ of positive integers which add up to $n$. By convention, $\varnothing$ is the unique partition of $0$ and is called the {\it empty partition}. The set of partitions of $n$
will be denoted by $\mathcal{P}(n)$, and the set of all partitions by $\mathcal{P} := \sqcup_{n \in \mathbb{N}} \mathcal{P}(n)$. We shall also use the notation $\Lambda = (1^{r_1},2^{r2}, \ldots, n^{r_n})$ where $r_i$ denotes the multiplicity of $i$ in the sequence $\lambda$.

Given $\lambda$ and $\mu$ two partitions of $n_1\in \mathbb{N}$ and $n_2\in \mathbb{N}$ respectively, we denote by $\lambda \sqcup \mu$ the partition of $n_1+n_2$ obtained by concatenation of the two partitions and by reordering the parts to obtain a partition. If $\lambda$ is a partition of $n$ and $k\in \mathbb{N}$, we denote by $\lambda^k$ the partition $\underbrace{\lambda \sqcup\lambda \sqcup \cdots\sqcup \lambda}_{k\text{ times}}$.

Let $d\in \mathbb{N}_{>1}$. A partition $\lambda$ is called {\it $d$-regular} if no part in $\lambda$ is repeated $d$ or more times. Each partition $\lambda$ can be decomposed uniquely as $\lambda=\mu\sqcup \nu^d$ where $\mu$ is $d$-regular. Then $\lambda$ is $d$-regular if and only if $\nu$ is empty. The set of $d$-regular partitions of $n$ is denoted by $\operatorname{Reg}_d (n)$. This set has a remarkable involution $\mathsf{M}_d$ called the \emph{Mullineux involution} which will be def\/ined in the next section (see Section~\ref{sec:crystalmull} for its interpretation in terms of crystals).

More generally we shall decompose partitions with respect to two integers. Recall that $\ell$ is a~prime number. If $\lambda = (1^{r_1},2^{r2}, \ldots, n^{r_n})$ is a partition of $n$, we can decompose the integers $r_i$ as
 \begin{gather*} r_i = r_{i,-1} + d r_{i,0} + d\ell r_{i,1} + d\ell^2 r_{i,2} + \cdots + d\ell^n r_{i,n} \end{gather*}
 with $0 \leq r_{i,-1} < d$ and $0 \leq r_{i,j} < \ell$ for all $j \geq 0$. If we def\/ine the partition $\lambda_{(j)} = (1^{r_{1,j}}, 2^{r_{2,j}}, \ldots, n^{r_{n,j}})$, then
\begin{gather*}\lambda = \lambda_{(-1)} \sqcup (\lambda_{(0)})^d \sqcup (\lambda_{(1)})^{d\ell} \sqcup \cdots \sqcup (\lambda_{(n)})^{d\ell^n},\end{gather*}
where $\lambda_{(-1)}$ is $d$-regular and $\lambda_{(j)}$ is $\ell$-regular for all $j \geq 0$. This decomposition is called the \emph{$d$-$\ell$-adic decomposition of $\lambda$}~\cite{DipDu}.

\subsection[Hecke algebras of type $A$ and the Mullineux involution]{Hecke algebras of type $\boldsymbol{A}$ and the Mullineux involution}\label{sec:hecke}
Let $q\in k^\times$ and $m\geq 1$. We denote by $\mathcal{H}_q (\mathfrak{S}_m)$ the Iwahori--Hecke algebra of the symmetric group $\mathfrak{S}_m$ over $k$, with parameter $q$. It has a $k$-basis $\{T_w\}_{w\in \mathfrak{S}_m}$ satisfying the following relations, for $w \in \mathfrak{S}_m$ and $s = (i,i+1)$ a simple ref\/lection:
\begin{gather*}T_w T_s= \begin{cases} T_{ws} & \text{if $\ell(ws)>\ell(w)$ (i.e., if $w(i) < w(i+1)$)}, \\
qT_{ws}+(q-1) T_w &\text{otherwise}.
\end{cases} \end{gather*}
In particular the basis elements corresponding to the simple ref\/lections generate $\mathcal{H}_q (\mathfrak{S}_m)$ as an algebra, and they satisfy the relation $(T_s-q)(T_s+1) = 0$.

Let us consider the integer
\begin{gather*}e:=\min \big\{i\geq 0\, |\, 1+q+q^2+\cdots+q^{i-1}=0\big\}\in \mathbb{N}_{>1}.\end{gather*}
It is equal to the order of $q$ in $k^\times$ when $q\neq 1$, and to $\ell = \mathrm{char}\,k$ when $q =1$.
Then the set of simple $\mathcal{H}_q (\mathfrak{S}_m)$-modules is parametrized by the set of $e$-regular partitions of $m$. Given an $e$-regular partition $\lambda$, we will denote by $D(\lambda)$ the corresponding simple module. Note that when $q=1$, the Hecke algebra $\mathcal{H}_q (\mathfrak{S}_m)$ coincides with the group algebra of $\mathfrak{S}_m$ over $k$, whose irreducible representations are parametrized by $\ell$-regular partitions of~$m$.

The map $\alpha\colon T_w \longmapsto (-q)^{\ell(w)} (T_{w^{-1}})^{-1}$ is an algebra automorphism of $\mathcal{H}_q (\mathfrak{S}_m)$ of order~$2$. This follows from the fact that $-qT_s^{-1}$ satisf\/ies the same quadratic equation as $T_s$. The automor\-phism~$\alpha$ induces a permutation $\alpha^*$ on the set of simple $\mathcal{H}_q (\mathfrak{S}_m)$-modules, and therefore on the set of $e$-regular partitions. In other words, the exists an involution $\mathsf{M}_e$ on $\operatorname{Reg}_e (n)$, called the \emph{Mullineux involution}, such that for any $e$-regular partition $\lambda$
\begin{gather*} \alpha^* (D(\lambda)) \simeq D(\mathsf{M}_e(\lambda)).\end{gather*}
In the case where $q=1$, the involution $\alpha$ is just the multiplication by the sign representation $\varepsilon$ in the group algebra of $\mathfrak{S}_m$ and $D(\mathsf{M}_\ell(\lambda)) \simeq D(\lambda) \otimes \varepsilon$.

The involution $\mathsf{M}_\ell$ was introduced by Mullineux, who suggested in~\cite{Mu} a conjectural explicit combinatorial algorithm to compute it. This conjecture was subsequently proved by Ford--Kleshchev in~\cite{ForKle}. An interpretation in terms of crystals was later given by Kleshchev~\cite{Kle} (for the case $q=1$) and by Brundan~\cite{Brun} for the general case. We will review their result in Section~\ref{sec:crystalmull}.

\begin{Remark}\label{rem:asymptotic}
When $e>m$, every partition of $m$ is $e$-regular. The Hecke algebra is actually semi-simple in that case and the Mullineux involution $\mathsf{M}_e$ corresponds to conjugating partitions.
\end{Remark}

Recall that $\mathfrak{S}_m$ has a structure of a Coxeter group, where the simple ref\/lections are given by the transpositions $(i,i+1)$. Given a parabolic subgroup~$\mathfrak{S}$ of~$\mathfrak{S}_m$, one can consider the subalgebra of~$\mathcal{H}_q (\mathfrak{S}_m)$ generated by $\{T_w\}_{w \in \mathfrak{S}}$. It corresponds to the Hecke algebra $\mathcal{H}_q (\mathfrak{S})$ of~$\mathfrak{S}$ with parameter~$q$, and $\mathcal{H}_q (\mathfrak{S}_m)$ is f\/lat as a module over that subalgebra. It is even free, with basis given by the elements $T_w$ where $w$ runs over a set of representatives of $\mathfrak{S}\backslash \mathfrak{S}_m$ with minimal length. Therefore, following Section~\ref{sec:comphecke} (see also~\cite{LinSch}) we can use the induction and restriction functors to def\/ine a duality functor $\mathsf{D}_\mathcal{H}$ on the bounded derived category of f\/initely generated $\mathcal{H}_q (\mathfrak{S}_m)$-modules. For Hecke algebras this duality functor is actually a shifted Morita equivalence.

\begin{Theorem}[Linckelmann--Schroll \cite{LinSch}]\label{thm:dualityhecke}
Given a finitely generated right $\mathcal{H}_q (\mathfrak{S}_m)$-module $X$ we have
\begin{gather*} \mathsf{D}_\mathcal{H} (X) \simeq \alpha^*(X) [-m+1]\end{gather*}
in $D^b(\modn\mathcal{H}_q (\mathfrak{S}_m))$.
\end{Theorem}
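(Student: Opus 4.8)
The plan is to analyze the defining complex of $\mathsf{D}_\mathcal{H}$ directly, exploiting the fact that for the Hecke algebra of type $A$ the parabolic subalgebras are themselves Hecke algebras of (products of) symmetric groups, and that these form a Coxeter-theoretic lattice. First I would recall that for $\mathcal{H}_q(\mathfrak{S}_m)$ the simple reflections are $s_1,\dots,s_{m-1}$, the role of $\Delta/F$ is played by $\{s_1,\dots,s_{m-1}\}$, so $|\Delta/F| = m-1$; the Levi subgroups $\mathbf{L}\in\mathcal{L}_r$ are replaced by standard parabolic subalgebras $\mathcal{H}_q(\mathfrak{S}_J)$ indexed by subsets $J$ with $|J| = m-1-r$, and $\mathcal{H}_q(\mathfrak{S}_m)$ is free (hence flat) over each such $\mathcal{H}_q(\mathfrak{S}_J)$ with an explicit basis of minimal-length coset representatives. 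Thus $\mathsf{D}_\mathcal{H}$ is a well-defined triangulated endofunctor of $D^b(\modn\mathcal{H}_q(\mathfrak{S}_m))$, concentrated between degrees $-(m-1)$ and $0$.

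Next I would compute $\mathsf{D}_\mathcal{H}$ applied to a projective module, or more cleanly, identify $\mathsf{D}_\mathcal{H}$ as a derived tensor product with a complex of $(\mathcal{H},\mathcal{H})$-bimodules. The key observation is that the term $\operatorname{Ind}_{\mathcal{H}_J}^{\mathcal{H}}\operatorname{Res}_{\mathcal{H}_J}^{\mathcal{H}}$ is tensoring with the bimodule $\mathcal{H}\otimes_{\mathcal{H}_J}\mathcal{H}$, and the whole complex is a Koszul-type resolution. I expect that this complex of bimodules is quasi-isomorphic to a single bimodule placed in degree $-(m-1)$, namely the bimodule obtained from $\mathcal{H}$ by twisting the right action through the automorphism $\alpha$ — this is the bimodule incarnation of the sign-twist. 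One way to see this is to use the antipode/trace on $\mathcal{H}$: the Hecke algebra is symmetric, and the Alvis--Curtis-type complex computes (a shift of) the inverse of the Nakayama-type self-duality, which for $\mathcal{H}_q(\mathfrak{S}_m)$ is known to be the $\alpha$-twist. Alternatively one can induct on $m$ using the recursion for the complex coming from removing the last simple reflection, together with the mapping-cone description of $\mathsf{D}_\mathcal{H}$.

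A concrete route: verify the formula on the regular module $\mathcal{H} = \mathcal{H}_q(\mathfrak{S}_m)$ viewed as a right module over itself, where everything is explicit, and then extend to all finitely generated modules by the fact that $\mathsf{D}_\mathcal{H}$ and $\alpha^*[-m+1]$ are both triangulated functors agreeing on a generator of the derived category (the projectives are summands of $\mathcal{H}$, and $D^b$ is generated by projectives since $\mathcal{H}$ has finite global dimension... or rather, since $\mathcal{H}$ is a finite-dimensional algebra, $D^b(\modn\mathcal{H})$ is generated as a triangulated category by the projective modules, which are summands of $\mathcal{H}$). On projectives, $\mathsf{D}_\mathcal{H}(P) = P\otimes_\mathcal{H}(\text{complex of bimodules})$, and once the complex of bimodules is identified with $\mathcal{H}_\alpha[-m+1]$ — where $\mathcal{H}_\alpha$ denotes $\mathcal{H}$ with right action twisted by $\alpha$ — we get $\mathsf{D}_\mathcal{H}(P)\simeq P\otimes_\mathcal{H}\mathcal{H}_\alpha[-m+1]$, and $-\otimes_\mathcal{H}\mathcal{H}_\alpha$ is exactly $\alpha^*$ on right modules.

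The main obstacle, as I see it, is the precise identification of the complex of bimodules $0\to\mathcal{H}\otimes_{\mathcal{H}_\varnothing}\mathcal{H}\to\cdots\to\bigoplus_{|J|=m-2}\mathcal{H}\otimes_{\mathcal{H}_J}\mathcal{H}\to\mathcal{H}\to 0$ with $\mathcal{H}_\alpha[-m+1]$, including pinning down the correct signs in the differentials so that it is genuinely a complex and computing its homology. For the group algebra case ($q=1$) this is the classical Solomon/Curtis complex whose homology is the Steinberg module twisted appropriately, and the Hecke deformation is formally parallel; I would most likely transcribe the computation from the analogous statement for the Coxeter complex of $\mathfrak{S}_m$, checking that the $\alpha$-twist arises because $\alpha$ implements the longest-element conjugation combined with the sign character on the nose. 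The bookkeeping of which induction functor contributes which shift, and verifying exactness of each term (guaranteed by freeness over parabolic subalgebras), is routine once the bimodule picture is set up; the genuinely delicate point is the homology computation identifying the answer as a rank-one free bimodule twisted by $\alpha$.
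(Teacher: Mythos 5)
This statement is not proved in the paper at all: it is imported verbatim from Linckelmann--Schroll \cite{LinSch}, so there is no internal argument to compare yours against. Your framework is nonetheless the correct one, and it is essentially how the cited result is actually established: rewrite each term $\operatorname{Ind}_{\mathcal{H}_J}^{\mathcal{H}}\operatorname{Res}_{\mathcal{H}_J}^{\mathcal{H}}$ as tensoring with the bimodule $\mathcal{H}\otimes_{\mathcal{H}_J}\mathcal{H}$, observe that each such bimodule is free as a left (and right) $\mathcal{H}$-module because $\mathcal{H}$ is free over its parabolic subalgebras on minimal-length coset representatives, and then reduce the whole theorem to identifying the homology of the resulting two-sided $q$-Coxeter complex with the rank-one bimodule ${}_{1}\mathcal{H}_{\alpha}$ sitting in degree $-(m-1)$. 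Your reduction from "all finitely generated $X$" to the bimodule statement is also sound, since a quasi-isomorphism between bounded complexes of left-free bimodules survives $X\otimes_{\mathcal{H}}-$.

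The genuine gap is the one you yourself flag: the concentration of the homology of that bimodule complex in top degree, and its identification with the $\alpha$-twist of the regular bimodule, is not a routine transcription of the one-sided Solomon--Tits/Coxeter-complex computation --- it is the entire content of the theorem, and it is precisely what the five-page paper \cite{LinSch} is devoted to proving. The one-sided statement (exactness of $0\to\mathcal{H}\otimes_{\mathcal{H}_\varnothing}k\to\cdots\to k\to 0$, i.e., the $q$-Steinberg module) does not formally imply the two-sided one; one needs an additional filtration of the bimodule complex by left-submodule subcomplexes indexed by cosets, reducing degreewise to the one-sided complex, and then a separate verification that the surviving top homology carries the right action twisted by $T_s\mapsto -qT_s^{-1}$ rather than the untwisted one. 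Your heuristic that ``$\alpha$ implements the longest-element conjugation combined with the sign character'' is also not quite on the nose and would need to be replaced by this explicit check. So the proposal is a correct and well-organized reduction of the theorem to its hardest step, but that step is asserted rather than proved; as it stands it is a plan for reproving \cite{LinSch}, not a proof.
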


In particular when $X = D(\lambda)$ is simple we get $\mathsf{D}_\mathcal{H} (D(\lambda)) \simeq D(\mathsf{M}_e(\lambda)) [-m+1]$.

\subsection[Harish-Chandra series of $\mathrm{GL}_n(q)$ and a generalized Mullineux involution]{Harish-Chandra series of $\boldsymbol{\mathrm{GL}_n(q)}$ and a generalized Mullineux involution}\label{sec:hcgln}

From now on $\mathbf{G} = \mathrm{GL}_n(\overline{\mathbb{F}}_p)$ is the general linear group over an algebraic closure of $\mathbb{F}_p$, and $F\colon \mathbf{G} \longrightarrow \mathbf{G}$ the standard Frobenius endomorphism, raising the entries of a matrix to the $q$-th power. The corresponding f\/inite reductive group is $G = \mathrm{GL}_n(q)$.

Recall that the unipotent simple $kG$-modules are parametrized by partitions of $n$. We will denote by $S(\lambda)$ the simple $kG$-module corresponding to the partition $\lambda$. We review now the results in \cite{DipDu,GHM94} on the partition of the unipotent representations into Harish-Chandra series (see also \cite[Section~19]{CaEn}). This classif\/ication depends on both $\ell$, and the integer $e > 1$ def\/ined as the order of $q$ modulo $\ell$ (with the convention that $e = \ell$ if $q \equiv 1$ modulo $\ell$).

By \cite[Corollary~4.3.13]{DipDu} $S(\lambda)$ is cuspidal if and only if $\lambda = 1$ or $\lambda = 1^{e\ell^i}$ for some $i \geq 0$. In particular, given $n \geq 1$ there is at most one cuspidal unipotent simple $k\mathrm{GL}_n(q)$-module, and there is exactly one if and only if $n =1$ or $n= e\ell^i$ for some $i \geq 0$. By considering products of such representations one can construct any cuspidal pair of $G$. To this end, we introduce the set
\begin{gather*}\mathcal{N}(n) = \big\{ {\bf m} = (m_{-1},m_0,\ldots,m_n) \, | \, n = m_{-1} + em_0 + e\ell m_1+\cdots+e\ell^nm_n\big\}.\end{gather*}
Given ${\bf m} \in \mathcal{N}(n)$ we def\/ine the standard Levi subgroup
\begin{gather*}L_\mathbf{m} = \mathrm{GL}_{1}(q)^{m_{-1}} \times \mathrm{GL}_{e}(q)^{m_0} \times \cdots \times \mathrm{GL}_{e\ell^n}(q)^{m_n}.\end{gather*}
It has a unique cuspidal unipotent simple module $X_{\mathbf{m}}$, and all the cuspidal pairs $(L_{\mathbf{m}},X_{\mathbf{m}})$ of~$G$ are obtained this way for various ${\mathbf{m}} \in \mathcal{N}(n)$.

The simple $kG$-modules lying in the corresponding Harish-Chandra series are parametrized by the irreducible representations of the endomorphism algebra $\mathcal{H}_\mathbf{m} = \operatorname{End}_{G}\big(\HC_{L_\mathbf{m}}^G(X_\mathbf{m})\big)$. By \cite[Section~3.4]{DipDu} (see also \cite[Lemma~19.24]{CaEn}), there is a natural isomorphism of algebras
\begin{gather}\label{eq:isohecke}
\mathcal{H}_\mathbf{m} \simeq \mathcal{H}_{q}(\mathfrak{S}_{m_{-1}}) \otimes k \mathfrak{S}_{m_0} \otimes \cdots \otimes k \mathfrak{S}_{m_n},
\end{gather}
where $\mathcal{H}_{q}(\mathfrak{S}_{m_{-1}})$ is the Iwahori--Hecke algebra of $\mathfrak{S}_{m_{-1}}$ introduced in the previous section. Therefore the simple modules of $\mathcal{H}_\mathbf{m}$ are parametrized by tuples of partitions
\begin{gather*} \boldsymbol{\lambda}= (\lambda_{(-1)},\lambda_{(0)},\ldots,\lambda_{(n)}),\end{gather*} where $\lambda_{(-1)}$ is an $e$-regular partition of $m_{-1}$ and each $\lambda_{(i)}$ for $i \geq 0$ is an $\ell$-regular partition of~$m_{i}$.

By the tensor product theorem of Dipper--Du \cite[Corollary~4.3.11]{DipDu} (see also \cite[Theorem~19.20]{CaEn}), the simple module $S(\lambda)$ of $G$ attached to this multipartition $\boldsymbol\lambda$ is given by
\begin{gather}\label{eq:multipart}
\lambda = \lambda_{(-1)} \sqcup (\lambda_{(0)})^e \sqcup (\lambda_{(1)})^{e\ell} \sqcup \cdots \sqcup (\lambda_{(n)})^{e\ell^n}.
\end{gather}
In other words,
\begin{gather}\label{eq:paramsimple}
\operatorname{Hom}_{kG}\big(R_{L_{\bf m}}^G(X_{\bf m}), S(\lambda)\big) \simeq D(\boldsymbol\lambda) \qquad \text{in} \ \mathsf{mod}\text{-} \mathcal{H}_{\bf m}. \end{gather}
Conversely, any partition $\lambda$ of $n$ can be uniquely decomposed as \eqref{eq:multipart} using the $e$-$\ell$-adic decomposition def\/ined in Section~\ref{sec:partitions}. The tuple $\mathbf{m} = (|\lambda_{(-1)}|, |\lambda_{(0)}|,\ldots,|\lambda_{(n)}|)$ will be denoted by $\mathsf{hc}(\lambda)$, thus def\/ining a map $\mathsf{hc} \colon \mathcal{P}(n) \longrightarrow \mathcal{N}(n)$. With this notation, the simple $kG$-modules lying in the Harish-Chandra of $(L_{\bf m},X_{\bf m})$ are parametrized by $\mathsf{hc}^{-1}(\mathbf{m})$.

Motivated by the isomorphism \eqref{eq:isohecke} and the tensor product theorem of Dipper--Du we def\/ine a version of the Mullineux involution as follows.

\begin{Definition}\label{def:mullineux}
Let $\lambda$ be a partition of $n$, written as $\lambda = \lambda_{(-1)} \sqcup (\lambda_{(0)})^e \sqcup \cdots \sqcup (\lambda_{(n)})^{e\ell^n}$ where $\lambda_{(-1)}$ is $e$-regular and each $\lambda_{(i)}$ for $i \geq 0$ is $\ell$-regular.
The \emph{generalized Mullineux involution} on $\lambda$ is def\/ined by
\begin{gather*} \mathsf{M}_{e,\ell} (\lambda) = \mathsf{M}_{e}(\lambda_{(-1)}) \sqcup ( \mathsf{M}_{\ell}(\lambda_{(0)}))^e \sqcup \cdots \sqcup ( \mathsf{M}_{\ell}(\lambda_{(n)}))^{e\ell^n}.\end{gather*}
\end{Definition}

\begin{Example}\label{ex:mull}It is interesting to note the following particular cases.
\begin{enumerate}\itemsep=0pt
\item[(a)] If $\lambda$ is an $e$-regular partition then $\Mull_{e,\ell}(\lambda)=\Mull_e(\lambda)$ is the ordinary Mullineux involution.
\item[(b)] If $e\ell > n$ then $\lambda_{(i)} = \varnothing$ for $i> 0$. In addition, $\ell$ is bigger than the size of
$\lambda_{(0)}$ and therefore $\Mull_\ell(\lambda_{(0)}) = \lambda_{(0)}^t$ is just the conjugate of $\lambda_{(0)}$.
Consequently we get
\begin{gather*}\Mull_{e,\ell}(\lambda) = \Mull_e(\lambda_{(-1)}) \sqcup ({\lambda_{(0)}}^t)^e.\end{gather*}
Quite remarkably, this involution already appears in the work of Bezrukavnikov~\cite{Be} and Losev \cite[Corollary~5.7]{Lo} on the wall-crossing bijections for representations of rational Cherednik algebras.
\end{enumerate}
\end{Example}

Let $\operatorname{Irr}_k \big(G | (L_\mathbf{m},X_{\mathbf{m}})\big)$ be the set of isomorphism classes of simple $kG$-modules lying in the Harish-Chandra series associated with $(L_\mathbf{m},X_{\mathbf{m}})$. Note that $M_{e,\ell}$ preserves $\mathsf{hc}^{-1}(\mathbf{m})$, therefore~$S(\lambda)$ and $S(\mathsf{M}_{e,\ell}(\lambda))$ lie in the same Harish-Chandra series. By construction, the involution~$\mathsf{M}_{e,\ell}$ is the unique operation which makes the following diagram commutative:
\begin{gather*}\xymatrix@!C=2.5cm{ \mathsf{hc}^{-1}(\mathbf{m}) \ar@{<->}[r] \ar[d]^{\mathsf{M}_{e,\ell}} & \operatorname{Irr}_k \big(G | (L_\mathbf{m},X_{\mathbf{m}})\big) \ar[rr]^{\operatorname{Hom}_G\big(\HC_{L_\mathbf{m}}^G(X_\mathbf{m}),-\big)} & & \operatorname{Irr} \mathcal{H}_\mathbf{m} \ar[d]^{\alpha^*} \\
 \mathsf{hc}^{-1}(\mathbf{m}) \ar@{<->}[r] & \operatorname{Irr}_k \big(G | (L_\mathbf{m},X_{\mathbf{m}})\big) \ar[rr]^{\operatorname{Hom}_G\big(\HC_{L_\mathbf{m}}^G(X_\mathbf{m}),-\big) }
& & \operatorname{Irr} \mathcal{H}_\mathbf{m}. }\end{gather*}

\subsection{Computation of $\mathsf{d}_G(S)$}
Fix ${\bf m} = (m_{-1},m_0,\ldots,m_n) \in \mathcal{N}(n)$ and let $(L_{\mathbf{m}},X_{\mathbf{m}})$ be the corresponding cuspidal pair, as def\/ined in Section~\ref{sec:hcgln}. One cannot apply Theorem \ref{thm:main} directly to $G$ and $L_{\mathbf{m}}$ since the assumption (ii) might not be satisf\/ied for every intermediate Levi subgroup between $L_{\mathbf{m}}$ and $G$. For example, if ${\bf m} = (e,1)$ then the Levi subgroups $(\mathrm{GL}_1(q))^e \times \mathrm{GL}_e(q)$ and $ \mathrm{GL}_e(q) \times (\mathrm{GL}_1(q))^e$ are conjugate under $\mathrm{GL}_{2e}(q)$ but not under $\mathrm{GL}_e(q) \times \mathrm{GL}_e(q)$. To solve this problem we consider, instead of $\mathbf{G}$, the standard Levi subgroup
\begin{gather*} \mathbf{G}_\mathbf{m} = \mathrm{GL}_{m_{-1}} \big(\overline{\mathbb{F}}_p\big)\times \mathrm{GL}_{em_{0}} \big(\overline{\mathbb{F}}_p\big) \times \mathrm{GL}_{e\ell m_{1}} \big(\overline{\mathbb{F}}_p\big) \times \cdots \times \mathrm{GL}_{e\ell^nm_{n}} \big(\overline{\mathbb{F}}_p\big).\end{gather*}
Then $\mathbf{L}_\mathbf{m}$ is the only standard Levi subgroup of $ \mathbf{G}_m$ which is conjugate to $\mathbf{L}_\mathbf{m}$. In particular, assumption~(ii) of Theorem~\ref{thm:main} is satisf\/ied.

\begin{Lemma}\label{lem:heckeiso}
The functor $\HC_{G_\mathbf{m}}^G$ induces an isomorphism of algebras
\begin{gather*} \operatorname{End}_{G_\mathbf{m}}\big(\HC_{L_\mathbf{m}}^{G_\mathbf{m}}(X_\mathbf{m})\big) \simto
\operatorname{End}_{G}\big(\HC_{L_\mathbf{m}}^{G}(X_\mathbf{m})\big) = \mathcal{H}_\mathbf{m}.\end{gather*}
\end{Lemma}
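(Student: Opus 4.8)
The plan is to exploit the fact that $\HC_{G_\mathbf{m}}^G$ is a composite of Harish-Chandra induction functors from the standard Levi $\mathbf{G}_\mathbf{m}$ all the way up to $\mathbf{G}$, and that such a functor is fully faithful. First I would recall from Section~\ref{sec:hcindres} that transitivity of Harish-Chandra induction gives a natural isomorphism $\HC_{L_\mathbf{m}}^{G} \simeq \HC_{G_\mathbf{m}}^{G}\circ \HC_{L_\mathbf{m}}^{G_\mathbf{m}}$. Applying $\operatorname{End}_{G}(-)$ to the object $\HC_{G_\mathbf{m}}^{G}\big(\HC_{L_\mathbf{m}}^{G_\mathbf{m}}(X_\mathbf{m})\big)$ and using that $\HC_{G_\mathbf{m}}^{G}$ is fully faithful, we get a natural isomorphism of $k$-algebras
\begin{gather*}
\operatorname{End}_{G_\mathbf{m}}\big(\HC_{L_\mathbf{m}}^{G_\mathbf{m}}(X_\mathbf{m})\big) \xrightarrow{\ \sim\ } \operatorname{End}_{G}\big(\HC_{G_\mathbf{m}}^{G}\HC_{L_\mathbf{m}}^{G_\mathbf{m}}(X_\mathbf{m})\big) \simeq \operatorname{End}_{G}\big(\HC_{L_\mathbf{m}}^{G}(X_\mathbf{m})\big),
\end{gather*}
where the first map is precisely the one induced by $\HC_{G_\mathbf{m}}^{G}$ on endomorphism rings, and the second comes from transporting along the transitivity isomorphism. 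Composing these two identifies the source with $\mathcal{H}_\mathbf{m}$ as an algebra.

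The one point that genuinely needs justification is that $\HC_{G_\mathbf{m}}^{G}$ is fully faithful on the relevant module category. The cleanest way is to invoke the biadjunction of $\HC$ and $\rHC$ from Section~\ref{sec:hcindres} together with the Mackey formula: for a standard Levi $\mathbf{G}_\mathbf{m}$ of the standard parabolic, $\rHC_{G_\mathbf{m}}^{G}\HC_{G_\mathbf{m}}^{G}$ decomposes as a sum over $W_{G}$-double cosets, and since $\mathbf{G}_\mathbf{m}$ is a Levi of a \emph{parabolic} subgroup of $\mathbf{G}$ there is a distinguished coset (the identity) contributing $\operatorname{Id}$, so that the unit $\operatorname{Id}\to \rHC_{G_\mathbf{m}}^{G}\HC_{G_\mathbf{m}}^{G}$ is split injective; hence $\operatorname{Hom}_{G}\big(\HC_{G_\mathbf{m}}^{G}(A),\HC_{G_\mathbf{m}}^{G}(B)\big)\simeq \operatorname{Hom}_{G_\mathbf{m}}\big(A,\rHC_{G_\mathbf{m}}^{G}\HC_{G_\mathbf{m}}^{G}(B)\big)$ contains $\operatorname{Hom}_{G_\mathbf{m}}(A,B)$ as a direct summand. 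To upgrade this to an actual isomorphism one uses that $X_\mathbf{m}$ is cuspidal: the other Mackey terms ${}^x(\rHC\text{-restrictions})$ applied to the relevant inductions of $X_\mathbf{m}$ involve proper Harish-Chandra restrictions of a cuspidal module and therefore vanish, so only the identity coset survives. This is the standard argument that Harish-Chandra induction from a Levi of a parabolic is fully faithful on (sums of copies of) a cuspidal pair — essentially the same computation that appears in the proof of Proposition~\ref{prop:compatibilityindred}.

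I would then check, briefly, that the resulting bijection is multiplicative: the isomorphism on $\operatorname{Hom}$-spaces coming from a fully faithful functor is automatically compatible with composition, so it is an algebra isomorphism and not merely a linear one; and the transitivity isomorphism $t\colon \HC_{L_\mathbf{m}}^{G}\simeq \HC_{G_\mathbf{m}}^{G}\HC_{L_\mathbf{m}}^{G_\mathbf{m}}$ induces an algebra isomorphism on endomorphism rings by conjugation, $h\mapsto t\circ h\circ t^{-1}$. Composing the two algebra isomorphisms gives the claim, and naturality in the transitivity data (as fixed in Section~\ref{sec:hcindres}) ensures the identification is the expected one. The main obstacle, as indicated, is not any delicate estimate but making the full faithfulness of $\HC_{G_\mathbf{m}}^{G}$ precise enough to conclude that the unit is an isomorphism on the objects in play; once that is in hand the rest is formal.
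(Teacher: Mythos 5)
Your reduction to proving that $\HC_{G_\mathbf{m}}^G$ is fully faithful on $\HC_{L_\mathbf{m}}^{G_\mathbf{m}}(X_\mathbf{m})$ is the right reformulation, and the observation that faithfulness gives an embedding of algebras via the split unit is correct. But the step where you claim that ``only the identity coset survives'' in the Mackey decomposition of $\rHC_{G_\mathbf{m}}^G\HC_{G_\mathbf{m}}^G$ is false, and it cannot be fixed by appealing to cuspidality in the way you suggest. The module $\HC_{L_\mathbf{m}}^{G_\mathbf{m}}(X_\mathbf{m})$ you are inducing is \emph{not} cuspidal (unless $L_\mathbf{m}=G_\mathbf{m}$), so its proper Harish-Chandra restrictions do not vanish, and the non-identity terms in the Mackey sum survive. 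Indeed they must survive: if only the identity coset contributed, the argument would force $\operatorname{End}_{G}\big(\HC_{L_\mathbf{m}}^G(X_\mathbf{m})\big)\simeq\operatorname{End}_{L_\mathbf{m}}(X_\mathbf{m})=k$, whereas by \eqref{eq:isohecke} the algebra $\mathcal{H}_\mathbf{m}$ has dimension $m_{-1}!\,m_0!\cdots m_n!$, which is $>1$ whenever some $m_i\geq 2$. Your analogy with Proposition~\ref{prop:compatibilityindred} is also misleading: that proposition does not prove full faithfulness of $\HC_M^G$; it proves a compatibility of induction/restriction on Hecke algebras under a conjugacy hypothesis, which is a different statement.

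What actually closes the gap is a \emph{dimension count}, which is exactly what the paper does. Using the Mackey formula at the level of $L_\mathbf{m}$, the endomorphism algebra $\operatorname{End}_G\big(\HC_{L_\mathbf{m}}^G(X_\mathbf{m})\big)$ has $k$-dimension equal to the number of cosets in $N_G(L_\mathbf{m},X_\mathbf{m})/L_\mathbf{m}$ (here one uses that $X_\mathbf{m}$ \emph{itself} is cuspidal, so the proper restrictions of $X_\mathbf{m}$ vanish and only $x$ normalizing $L_\mathbf{m}$ contribute, and then ${}^xX_\mathbf{m}\simeq X_\mathbf{m}$ is needed for a nonzero Hom). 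The same count over $G_\mathbf{m}$ gives $|N_{G_\mathbf{m}}(L_\mathbf{m},X_\mathbf{m})/L_\mathbf{m}|$. The structural fact you never use, and which is the heart of the matter, is that the block sizes $1,e,e\ell,\ldots,e\ell^n$ appearing in $L_\mathbf{m}$ are pairwise distinct, so any $g\in G$ normalizing $L_\mathbf{m}$ (and hence $X_\mathbf{m}$) can only permute blocks of equal size and therefore lies in $G_\mathbf{m}=\prod_i\mathrm{GL}_{e\ell^i m_i}(q)$. This gives $N_G(L_\mathbf{m},X_\mathbf{m})=N_{G_\mathbf{m}}(L_\mathbf{m},X_\mathbf{m})$, hence equality of dimensions, and combined with the injectivity you already have this yields the isomorphism.
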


\begin{proof} Since $\HC_{G_\mathbf{m}}^G$ is fully-faithful, the natural map $\operatorname{End}_{G_\mathbf{m}}(\HC_{L_\mathbf{m}}^{G_\mathbf{m}}(X_\mathbf{m})) \longrightarrow
\operatorname{End}_{G}(\HC_{L_\mathbf{m}}^{G}(X_\mathbf{m}))$ is an embedding of algebras. To conclude it is enough to compute the dimensions using the Mackey formula. The equality comes from the fact that any element $g \in G$ which normalizes~$L_{\mathbf{m}}$ and~$X_\mathbf{m}$ is in fact in $G_\mathbf{m}$.
\end{proof}

In particular, the simple $kG_\mathbf{m}$-modules lying in the Harish-Chandra series of $(L_\mathbf{m},X_{\mathbf{m}})$ are also parametrized by multipartitions $\boldsymbol\lambda = (\lambda_{(-1)},\lambda_{(0)},\ldots,\lambda_{(n)})$ of $\mathbf{m}$, where $\lambda_{(-1)}$ is $e$-regular, and each $\lambda_{(i)}$ for $i\geq 0$ is $\ell$-regular. We will write $S_\mathbf{m}(\boldsymbol\lambda)$ for the simple module corresponding to $\boldsymbol\lambda$, which by def\/inition satisf\/ies
\begin{gather}\label{eq:parammulti}
\operatorname{Hom}_{kG_{\bf m}}(R_{L_{\bf m}}^{G_{\bf m}}(X_{\bf m}), S_{\bf m}(\boldsymbol \lambda)) \simeq D(\boldsymbol\lambda) \qquad \text{in} \ \mathsf{mod}\text{-} \mathcal{H}_{\bf m} \end{gather}
(compare with~\eqref{eq:paramsimple}). Then it follows from the tensor product theorem \cite[Corolary~4.3.11]{DipDu} that
\begin{gather*} S_{\bf m}(\boldsymbol\lambda) = S(\lambda_{(-1)}) \boxtimes S\big((\lambda_{(0)})^e\big) \boxtimes S\big((\lambda_{(1)})^{e\ell}\big) \boxtimes \cdots \end{gather*}
and
\begin{gather} \label{eq:tensorprod}
R_{G_{\bf m}}^G(S_{\bf m}(\boldsymbol\lambda)) \simeq S(\lambda),
\end{gather}
where as in \eqref{eq:multipart} we set $\lambda = \lambda_{(-1)} \sqcup (\lambda_{(0)})^e \sqcup (\lambda_{(1)})^{e\ell} \sqcup \cdots \sqcup (\lambda_{(n)})^{e\ell^n}$.

The construction of the isomorphism \eqref{eq:isohecke} given for example in \cite[Section~19]{CaEn} is compatible with induction and restriction. The map $\mathbf{M} \longmapsto \operatorname{End}_{M}(\HC_{L_\mathbf{m}}^{M}(X_\mathbf{m}))$ gives a one-to-one correspondence between the standard Levi subgroups of $\mathbf{G}_m$ containing $\mathbf{L}$ and the parabolic subalgebras of $\mathcal{H}_\mathbf{m}$. Since $q \neq 0$, $\mathcal{H}_\mathbf{m}$ is f\/lat over each of these subalgebras and Theorem \ref{thm:main} can be applied to get
\begin{gather}\label{eq:dhanddg}
\mathsf{D}_{\mathcal{H}_\mathbf{m}}\big(\mathrm{RHom}_{G_\mathbf{m}}\big(\HC_{L_\mathbf{m}}^{G_\mathbf{m}}(X_\mathbf{m}),-\big)\big) \, \mathop{\longrightarrow}\limits^\sim \, \mathrm{RHom}_{G_\mathbf{m}}\big(\HC_{L_\mathbf{m}}^{G_\mathbf{m}}(X_\mathbf{m}),\mathsf{D}_{G_\mathbf{m}}(-)\big).
\end{gather}
Recall from Section~\ref{sec:acduality} that given a simple $kG$-module $S$, there is unique composition factor in the cohomology of $\mathsf{D}_G(S)$ which lies in the same Harish-Chandra series as $S$. We denote this composition factor by $\mathsf{d}_G(S)$. Combining \eqref{eq:dhanddg} and Theorem \ref{thm:dualityhecke} we can determine $\mathsf{d}_{G_\mathbf{m}}$ explicitly on the unipotent representations.

\begin{Proposition}\label{prop:thmforGm}
Let $\boldsymbol\lambda = (\lambda_{(-1)}, \lambda_{(0)}, \lambda_{(1)}, \ldots, \lambda_{(n)})$ be a multipartition
where $\lambda_{(-1)}$ is an $e$-regular partition of $m_{-1}$ and each $\lambda_{(i)}$ for $i\geq 0$ is an $\ell$-regular partition of $m_i$. Then
\begin{gather*}d_{G_\mathbf{m}}(S_\mathbf{m}(\boldsymbol\lambda)) \simeq S_\mathbf{m}\big(\mathsf{M}_e(\lambda_{(-1)}), \mathsf{M}_\ell(\lambda_{(0)}), \ldots, \mathsf{M}_\ell(\lambda_{(n)})\big).\end{gather*}
\end{Proposition}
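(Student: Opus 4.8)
The plan is to reduce the computation of $\mathsf{d}_{G_\mathbf{m}}$ to the known description of $\mathsf{D}_\mathcal{H}$ on the Hecke algebra side, via the intertwining isomorphism \eqref{eq:dhanddg}. Concretely, I would start with a simple module $S_\mathbf{m}(\boldsymbol\lambda)$ of cuspidal depth $s$ (equal to the number of tensor factors of $\mathbf{L}_\mathbf{m}$ minus the number of factors of $\mathbf{G}_\mathbf{m}$, counted with $F$-orbits), apply the functor $\mathrm{RHom}_{G_\mathbf{m}}(\HC_{L_\mathbf{m}}^{G_\mathbf{m}}(X_\mathbf{m}),-)$ to $\mathsf{D}_{G_\mathbf{m}}(S_\mathbf{m}(\boldsymbol\lambda))$, and use \eqref{eq:dhanddg} to rewrite this as $\mathsf{D}_{\mathcal{H}_\mathbf{m}}(D(\boldsymbol\lambda))$, using \eqref{eq:parammulti}. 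The point is that the functor $\mathrm{RHom}_{G_\mathbf{m}}(\HC_{L_\mathbf{m}}^{G_\mathbf{m}}(X_\mathbf{m}),-)$ is, on the level of composition factors lying in the fixed Harish-Chandra series, an equivalence onto $\modn\mathcal{H}_\mathbf{m}$ compatible with the parametrization; so the unique composition factor of depth $s$ in the cohomology of $\mathsf{D}_{G_\mathbf{m}}(S_\mathbf{m}(\boldsymbol\lambda))$ (which is $\mathsf{d}_{G_\mathbf{m}}(S_\mathbf{m}(\boldsymbol\lambda))$ by Proposition~\ref{prop:perverse}) corresponds precisely to the unique simple $\mathcal{H}_\mathbf{m}$-module appearing in the appropriate cohomological degree of $\mathsf{D}_{\mathcal{H}_\mathbf{m}}(D(\boldsymbol\lambda))$.

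Next I would identify $\mathsf{D}_{\mathcal{H}_\mathbf{m}}$ explicitly using the tensor decomposition \eqref{eq:isohecke}, $\mathcal{H}_\mathbf{m} \simeq \mathcal{H}_q(\mathfrak{S}_{m_{-1}}) \otimes k\mathfrak{S}_{m_0} \otimes \cdots \otimes k\mathfrak{S}_{m_n}$. The complex of induction-restriction functors defining $\mathsf{D}_{\mathcal{H}_\mathbf{m}}$ factors as an (external) tensor product of the analogous complexes for each tensor factor, because the parabolic subalgebras of $\mathcal{H}_\mathbf{m}$ containing the image of the Hecke algebra of $\mathbf{L}_\mathbf{m}$ are exactly tensor products of parabolic subalgebras in the individual factors (this is where the passage from $\mathbf{G}$ to $\mathbf{G}_\mathbf{m}$ was essential). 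Hence $\mathsf{D}_{\mathcal{H}_\mathbf{m}}(D(\boldsymbol\lambda))$ is the external tensor product of $\mathsf{D}_\mathcal{H}(D(\lambda_{(-1)}))$ with the $\mathsf{D}_\mathcal{H}(D(\lambda_{(i)}))$ for $i \geq 0$, each computed in its own symmetric-group Hecke algebra. By Theorem~\ref{thm:dualityhecke} (Linckelmann--Schroll), $\mathsf{D}_\mathcal{H}(D(\lambda_{(-1)})) \simeq D(\mathsf{M}_e(\lambda_{(-1)}))[-m_{-1}+1]$, and $\mathsf{D}_\mathcal{H}(D(\lambda_{(i)})) \simeq D(\mathsf{M}_\ell(\lambda_{(i)}))[-m_i+1]$ for $i \geq 0$ (the parameter in the $i$-th factor is $q=1$, so $e$ is replaced by $\ell$ there). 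Taking the external tensor product, $\mathsf{D}_{\mathcal{H}_\mathbf{m}}(D(\boldsymbol\lambda))$ is concentrated in a single degree and equals $D(\mathsf{M}_e(\lambda_{(-1)}),\mathsf{M}_\ell(\lambda_{(0)}),\ldots,\mathsf{M}_\ell(\lambda_{(n)}))$ up to shift; transporting back through \eqref{eq:parammulti} gives the claimed formula for $\mathsf{d}_{G_\mathbf{m}}(S_\mathbf{m}(\boldsymbol\lambda))$.

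The main obstacle, and the step requiring the most care, is checking that the hypotheses (i) and (ii) of Theorem~\ref{thm:main} genuinely hold for $\mathbf{G}_\mathbf{m}$ and every intermediate standard Levi $\mathbf{M}$ containing $\mathbf{L}_\mathbf{m}$, and that the isomorphism \eqref{eq:dhanddg} therefore applies: assumption (ii) is precisely the statement (established just before Lemma~\ref{lem:heckeiso}) that $\mathbf{L}_\mathbf{m}$ is the unique standard Levi of $\mathbf{G}_\mathbf{m}$ conjugate to it, extended to the intermediate subgroups $\mathbf{M}$, and assumption (i) follows from $q \neq 0$ together with the flatness of $\mathcal{H}_q(\mathfrak{S}_m)$ over its parabolic subalgebras noted in Section~\ref{sec:hecke}. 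One must also verify that the parametrization of simple $\mathcal{H}_\mathbf{m}$-modules by multipartitions is compatible with the factorization \eqref{eq:isohecke}, so that the external tensor product of the $D(\lambda_{(i)})$ really is $D(\boldsymbol\lambda)$ and the degree shifts from the various factors add up consistently; this is essentially bookkeeping but needs to be stated. Finally, one invokes Proposition~\ref{prop:perverse}(iii): since $\mathsf{D}_{\mathcal{H}_\mathbf{m}}(D(\boldsymbol\lambda))$ is a single shifted simple module, its unique composition factor of the right depth is forced, and pulling back along $\mathrm{RHom}_{G_\mathbf{m}}(\HC_{L_\mathbf{m}}^{G_\mathbf{m}}(X_\mathbf{m}),-)$ identifies $\mathsf{d}_{G_\mathbf{m}}(S_\mathbf{m}(\boldsymbol\lambda))$ with the module on the right-hand side of the Proposition.
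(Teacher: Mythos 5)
Your proposal is correct and follows essentially the same strategy as the paper: transfer the computation to the Hecke algebra side via Theorem~\ref{thm:main} (i.e., the isomorphism~\eqref{eq:dhanddg}), then identify $\mathsf{D}_{\mathcal{H}_\mathbf{m}}$ as the twist by $\alpha^*$ up to a shift using Theorem~\ref{thm:dualityhecke}, and pull back through~\eqref{eq:parammulti}. The one place you are more explicit than the paper is the external tensor factorization of $\mathsf{D}_{\mathcal{H}_\mathbf{m}}$ along~\eqref{eq:isohecke}, which the paper leaves implicit when invoking Linckelmann--Schroll for $\mathcal{H}_\mathbf{m}$; conversely, the paper is slightly more careful with the homological step, using a truncation triangle to identify $\operatorname{Hom}_{G_\mathbf{m}}(\HC_{L_\mathbf{m}}^{G_\mathbf{m}}(X_\mathbf{m}),H^{-r}(\mathsf{D}_{G_\mathbf{m}}(S)))$ with $H^{-r}$ of the $\mathrm{RHom}$ complex.
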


\begin{proof} For simplicity we will write $S$ for $S_\mathbf{m}(\boldsymbol\lambda)$ throughout this proof.
Let $r$ be the cuspidal depth of the $kG_\mathbf{m}$-module $S$. By Proposition \ref{prop:perverse} and the def\/inition of $\mathsf{d}_{G_\mathbf{m}}(S)$ the natural map
\begin{gather*}\operatorname{Hom}_{G_\mathbf{m}}\big(\HC_{L_\mathbf{m}}^{G_\mathbf{m}}(X_\mathbf{m}),\mathsf{d}_{G_\mathbf{m}}(S)\big) \longrightarrow \operatorname{Hom}_{G_\mathbf{m}}\big(\HC_{L_\mathbf{m}}^{G_\mathbf{m}}(X_\mathbf{m}),H^{-r}(\mathsf{D}_{G_\mathbf{m}}(S))\big)\end{gather*}
is an isomorphism of right $\mathcal{H}_\mathbf{m}$-modules. Now let us consider the distinguished triangle
\begin{gather*} H^{-r}(\mathrm{D}_{G_\mathbf{m}}(S)) \longrightarrow \mathsf{D}_{G_\mathbf{m}}(S)[-r] \longrightarrow
\tau_{>-r}(\mathsf{D}_{G_\mathbf{m}}(S))[-r] \rightsquigarrow \end{gather*}
in $D^b(kG_\mathbf{m})$. We apply the functor $\operatorname{Hom}_{D^b(kG_\mathbf{m})}\big(\HC_{L_\mathbf{m}}^{G_\mathbf{m}}(X_\mathbf{m}),-\big)$, which, by the properties of $\mathsf{D}_{G_\mathbf{m}}(S)$ listed in Proposition \ref{prop:perverse}, gives an isomorphism
\begin{gather*}\operatorname{Hom}_{G_\mathbf{m}}\big(\HC_{L_\mathbf{m}}^{G_\mathbf{m}}(X_\mathbf{m}),H^{-r}(\mathrm{D}_{G_\mathbf{m}}(S))\big) \, \mathop{\longrightarrow}\limits^\sim \, \operatorname{Hom}_{D^b(kG_\mathbf{m})}\big(\HC_{L_\mathbf{m}}^{G_\mathbf{m}}(X_\mathbf{m}),\mathsf{D}_{G_\mathbf{m}}(S)[-r]\big).\end{gather*}
Combining this with Theorem \ref{thm:main} gives
\begin{gather*}
\operatorname{Hom}_{G_\mathbf{m}}\big(\HC_{L_\mathbf{m}}^{G_\mathbf{m}}(X_\mathbf{m}),H^{-r}(\mathrm{D}_{G_\mathbf{m}}(S))\big) \simeq H^{-r}\big(\mathrm{RHom}_{G_\mathbf{m}}\big(\HC_{L_\mathbf{m}}^{G_\mathbf{m}}(X_\mathbf{m}),\mathsf{D}_{G_\mathbf{m}}(S)\big)\big) \\
\hphantom{\operatorname{Hom}_{G_\mathbf{m}}\big(\HC_{L_\mathbf{m}}^{G_\mathbf{m}}(X_\mathbf{m}),H^{-r}(\mathrm{D}_{G_\mathbf{m}}(S))\big) }{}
\simeq H^{-r}\big(\mathsf{D}_\mathcal{H}\big(\mathrm{RHom}_{G_\mathbf{m}}(\HC_{L_\mathbf{m}}^{G_\mathbf{m}}(X_\mathbf{m}),S)\big)\big)
\end{gather*}
in $\modn\mathcal{H}_\mathbf{m}$. Now, by Theorem~\ref{thm:dualityhecke} the duality functor $\mathsf{D}_\mathcal{H}$ is induced by a shifted Morita equivalence, obtained by twisting by the algebra automorphism $\alpha$ def\/ined in Section~\ref{sec:hecke}. Note that the corresponding shift equals the rank of the Coxeter group associated with the Hecke algebra $\mathcal{H}_\mathbf{m}$, which also equals the cuspidal depth $r$ of~$S$. In particular, we have
\begin{gather*}
H^{-r}\big(\mathsf{D}_\mathcal{H}\big(\mathrm{RHom}_{G_\mathbf{m}}(\HC_{L_\mathbf{m}}^{G_\mathbf{m}}(X_\mathbf{m}),S)\big)\big)
\simeq H^{-r} \big(\alpha^* \mathrm{RHom}_{G_\mathbf{m}}\big(\HC_{L_\mathbf{m}}^{G_\mathbf{m}}(X_\mathbf{m}),S\big)[r]\big) \\
\hphantom{H^{-r}\big(\mathsf{D}_\mathcal{H}\big(\mathrm{RHom}_{G_\mathbf{m}}(\HC_{L_\mathbf{m}}^{G_\mathbf{m}}(X_\mathbf{m}),S)\big)\big)}{}
\simeq \alpha^*H^{0} \big( \mathrm{RHom}_{G_\mathbf{m}}\big(\HC_{L_\mathbf{m}}^{G_\mathbf{m}}(X_\mathbf{m}),S\big)\big) \\
\hphantom{H^{-r}\big(\mathsf{D}_\mathcal{H}\big(\mathrm{RHom}_{G_\mathbf{m}}(\HC_{L_\mathbf{m}}^{G_\mathbf{m}}(X_\mathbf{m}),S)\big)\big)}{}
\simeq \alpha^*\big(\operatorname{Hom}_{D^b(kG_\mathbf{m})}\big(\HC_{L_\mathbf{m}}^{G_\mathbf{m}}(X_\mathbf{m}),S\big)\big)
\simeq \alpha^* D(\boldsymbol\lambda).
\end{gather*}
Note that the last isomorphism uses \eqref{eq:parammulti} and the fact that the natural functor $kG_{\bf m} \rightarrow D^b(kG_{\bf m})$ is fully-faithful. Finally, by def\/inition of the Mullineux involution, the module $\alpha^* D(\boldsymbol\lambda)$ is the simple $\mathcal{H}_\mathbf{m}$-module labelled by the multipartition $(\mathsf{M}_e(\lambda_{(-1)}), \mathsf{M}_\ell(\lambda_{(0)}), \ldots, \mathsf{M}_\ell(\lambda_{(n)}))$.
\end{proof}

We can f\/inally prove the expected relation between the Alvis--Curtis duality and our gene\-ralization of the Mullineux involution.

\begin{Theorem}\label{thm:main2} Let $e = \min \{i\geq 0\, |\, 1+q+q^2+\cdots+q^{i-1}\equiv 0 \, {\rm mod}\, \ell\}$ and $\lambda$ be a partition of~$n$. Then
\begin{gather*} \mathsf{d}_{\mathrm{GL}_n(q)} (S(\lambda)) \simeq S(\mathsf{M}_{e,\ell}(\lambda)),\end{gather*}
where $\mathsf{M}_{e,\ell}$ is the generalized Mullineux involution defined in Definition~{\rm \ref{def:mullineux}}.
\end{Theorem}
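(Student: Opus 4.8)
The plan is to deduce Theorem~\ref{thm:main2} from Proposition~\ref{prop:thmforGm} by transporting the computation from the Levi subgroup $\mathbf{G}_\mathbf{m}$ up to $\mathbf{G}$ via Harish-Chandra induction. First I would fix a partition $\lambda$ of $n$, use its $e$-$\ell$-adic decomposition to write $\lambda = \lambda_{(-1)} \sqcup (\lambda_{(0)})^e \sqcup \cdots \sqcup (\lambda_{(n)})^{e\ell^n}$, and set $\mathbf{m} = \mathsf{hc}(\lambda) \in \mathcal{N}(n)$, so that $\boldsymbol\lambda = (\lambda_{(-1)},\ldots,\lambda_{(n)})$ parametrizes the simple $kG_\mathbf{m}$-module $S_\mathbf{m}(\boldsymbol\lambda)$ with $\HC_{G_\mathbf{m}}^G(S_\mathbf{m}(\boldsymbol\lambda)) \simeq S(\lambda)$ by~\eqref{eq:tensorprod}. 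Since $\mathbf{G}_\mathbf{m} \in \mathcal{L}_r$ where $r = |\Delta/F| - (\text{rank of the Coxeter group of }\mathcal{H}_\mathbf{m})$, the Curtis-type relation~\eqref{eq:curtistype} gives $\mathsf{D}_G \circ \HC_{G_\mathbf{m}}^G \simeq \HC_{G_\mathbf{m}}^G \circ \mathsf{D}_{G_\mathbf{m}}[r]$.

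The key step is then to track what happens to the distinguished composition factor. Applying $\mathsf{D}_G$ to $S(\lambda) \simeq \HC_{G_\mathbf{m}}^G(S_\mathbf{m}(\boldsymbol\lambda))$ and using the Curtis-type relation, the cohomology of $\mathsf{D}_G(S(\lambda))$ is, up to the shift, $\HC_{G_\mathbf{m}}^G$ applied to the cohomology of $\mathsf{D}_{G_\mathbf{m}}(S_\mathbf{m}(\boldsymbol\lambda))$. I would argue that $\HC_{G_\mathbf{m}}^G$ sends $\mathsf{d}_{G_\mathbf{m}}(S_\mathbf{m}(\boldsymbol\lambda))$ to a module whose unique composition factor lying in the Harish-Chandra series of $(L_\mathbf{m},X_\mathbf{m})$ is $\mathsf{d}_G(S(\lambda))$: indeed, $\HC_{G_\mathbf{m}}^G$ is exact, and since $G_\mathbf{m}$ is itself a standard Levi, the Harish-Chandra series of $\mathbf{G}$ attached to $(L_\mathbf{m},X_\mathbf{m})$ is the image under $\HC_{G_\mathbf{m}}^G$ of the corresponding series of $G_\mathbf{m}$; moreover $\HC_{G_\mathbf{m}}^G$ preserves cuspidal depth in the appropriate sense and, being a Morita-type functor on the relevant blocks (it induces the isomorphism $\mathcal{H}_\mathbf{m} \simto \mathcal{H}_\mathbf{m}$ of Lemma~\ref{lem:heckeiso}), it preserves the multiplicity-one property that singles out $\mathsf{d}$. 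By Proposition~\ref{prop:thmforGm} we have $\mathsf{d}_{G_\mathbf{m}}(S_\mathbf{m}(\boldsymbol\lambda)) \simeq S_\mathbf{m}(\mathsf{M}_e(\lambda_{(-1)}), \mathsf{M}_\ell(\lambda_{(0)}),\ldots,\mathsf{M}_\ell(\lambda_{(n)}))$, and applying $\HC_{G_\mathbf{m}}^G$ together with~\eqref{eq:tensorprod} identifies its image with $S(\mathsf{M}_e(\lambda_{(-1)}) \sqcup (\mathsf{M}_\ell(\lambda_{(0)}))^e \sqcup \cdots) = S(\mathsf{M}_{e,\ell}(\lambda))$ by Definition~\ref{def:mullineux}. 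Hence $\mathsf{d}_G(S(\lambda)) \simeq S(\mathsf{M}_{e,\ell}(\lambda))$.

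The main obstacle I anticipate is the compatibility of the invariant $\mathsf{d}$ with the induction functor $\HC_{G_\mathbf{m}}^G$ — that is, showing rigorously that $\HC_{G_\mathbf{m}}^G$ carries the distinguished composition factor of $\mathsf{D}_{G_\mathbf{m}}(S_\mathbf{m}(\boldsymbol\lambda))$ to the distinguished composition factor of $\mathsf{D}_G(S(\lambda))$. One has to be careful that the shift $r$ in~\eqref{eq:curtistype} matches the cuspidal depth of $S(\lambda)$ as a $kG$-module (which it does, since the cuspidal pair of $S(\lambda)$ is still $(L_\mathbf{m},X_\mathbf{m})$ and the depth is measured by the ambient root system data in the same way), and that no new composition factors in the $(L_\mathbf{m},X_\mathbf{m})$-series are created in other cohomological degrees after inducing. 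This is handled by combining the exactness of $\HC_{G_\mathbf{m}}^G$, the uniqueness statement in Proposition~\ref{prop:perverse}(iii), and the fact (used already in the proof of Proposition~\ref{prop:thmforGm}) that $\operatorname{Hom}_G(\HC_{L_\mathbf{m}}^G(X_\mathbf{m}),-)$ detects precisely the composition factors in the series. The remaining steps — the $e$-$\ell$-adic bookkeeping and the identification via the tensor product theorem — are routine given the earlier results.
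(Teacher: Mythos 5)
Your proposal follows essentially the same route as the paper: reduce to the Levi $G_\mathbf{m}$ via the Curtis relation~\eqref{eq:curtistype}, apply Proposition~\ref{prop:thmforGm}, and transport the answer back through $\HC_{G_\mathbf{m}}^G$ using the tensor product theorem~\eqref{eq:tensorprod}. The only difference lies in the final bookkeeping: where you track the distinguished composition factor directly through the cohomology (correctly identifying the depth and multiplicity-one facts that must be checked), the paper sidesteps this by passing to the Grothendieck group and computing modulo the sublattice spanned by classes of simple modules of strictly smaller cuspidal depth, which is stable under the duality and under $\HC_{G_\mathbf{m}}^G$.
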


\begin{proof} Let $\boldsymbol\lambda$ be the multipartition associated to $\lambda$ as in \eqref{eq:multipart}.
Given a bounded complex~$C$ of representations (of $kG$ or $kG_\mathbf{m}$), recall that $[C]$ denotes its class in the corresponding Grothen\-dieck group ($K_0(kG)$ or $K_0(kG_\mathbf{m})$, see Section~\ref{sec:notation}). By~\eqref{eq:curtistype} we have
\begin{gather}\label{eq:grothgroups}
\big[\mathsf{D}_G\big(\HC_{G_\mathbf{m}}^G(S_\mathbf{m}(\boldsymbol\lambda))\big)\big] = \pm \big[\HC_{G_\mathbf{m}}^G (\mathsf{D}_{G_\mathbf{m}}(S_\mathbf{m}(\boldsymbol\lambda)))\big] \qquad \text{in} \ K_0(kG).
\end{gather}
Let $\mathcal{C}$ (resp.~$\mathcal{C}_\mathbf{m}$) be the sublattice of $K_0(kG)$ (resp.~$K_0(kG_\mathbf{m})$) spanned by the classes of simple modules with cuspidal depth strictly less than $S(\lambda)$ (resp.~$S_\mathbf{m}(\boldsymbol\lambda)$). It follows from Proposition~\ref{prop:perverse}(ii) that these lattices are stable under Alvis--Curtis duality. In addition, the Harish-Chandra induction functor satisf\/ies $\HC_{G_\mathbf{m}}^G(\mathcal{C}_\mathbf{m}) \subset \mathcal{C}$.

By Proposition \ref{prop:perverse}, we have $[\mathsf{D}_{G_\mathbf{m}}(S_\mathbf{m}(\boldsymbol\lambda))] \in \pm [\mathsf{d}_{G_\mathbf{m}}(S_{\mathbf{m}}(\boldsymbol\lambda))] + \mathcal{C}_\mathbf{m}$. We deduce from~\eqref{eq:tensorprod} and Proposition~\ref{prop:thmforGm} that
\begin{gather*}\big[\HC_{G_\mathbf{m}}^G (\mathsf{D}_G(S_\mathbf{m}(\boldsymbol\lambda)))\big] \in \pm[S(\mathsf{M}_{e,\ell}(\lambda))]+ \mathcal{C}.\end{gather*}
On the other hand $[\mathsf{D}_G(S(\lambda))] \in \pm [\mathsf{d}_G(S(\lambda))] + \mathcal{C}$, so that again by \eqref{eq:tensorprod} we have
\begin{gather*} \big[\mathsf{D}_G\big(\HC_{G_\mathbf{m}}^G(S_\mathbf{m}(\boldsymbol\lambda))\big)\big] \in \pm[\mathsf{d}_G(S(\lambda))]+ \mathcal{C}\end{gather*}
and we conclude that $[\mathsf{d}_G(S(\lambda))] = [S(\mathsf{M}_{e,\ell}(\lambda))]$ using~\eqref{eq:grothgroups} .
\end{proof}

\begin{Remark}The simple unipotent $k\mathrm{GL}_n(q)$-module associated with the trivial partition \smash{$\lambda{=}(n)$} is the trivial module $k$. In that case the complex $\mathsf{D}_G(k)$ is quasi-isomophic to a module shifted in degree $-n+1$, by the Solomon--Tits theorem \cite[Theorem~66.33]{CuRei}. This module is a~charac\-te\-ris\-tic~$\ell$ version of the Steinberg representation. By Theorem~\ref{thm:main2}, its socle is isomorphic to~$S(\mathsf{M}_e(n))$, which is consistent with~\cite{Geck}.
\end{Remark}

\section{Interpretation in terms of crystals}

The aim of this section is to give an alternative description of the map $\mathsf{M}_{e,\ell}$ using the crystal graph theory in the same spirit as for the original Mullineux involution~\cite{Kle} (see Proposi\-tion~\ref{prop:main3}).

\subsection{More on partitions}
We f\/ix an integer $d > 1$. Given a partition $\lambda=(\lambda_1\geq \lambda_2 \geq \cdots \geq \lambda_r > 0)$, its \emph{Young diagram} $[\lambda]$ is the set
\begin{gather*}[\lambda] = \big\{(a,b)\, |\, 1\leq a\leq r,\, 1 \leq b\leq \lambda_a\big\} \subset \mathbb{N}\times\mathbb{N}.\end{gather*}
The elements of this set are called the {\it nodes} of $\lambda$. The {\it $d$-residue} of a node $\gamma\in [\lambda]$ is by def\/inition $\operatorname{res}_d (\gamma)=b-a+d\mathbb{Z}$. For $j\in \mathbb{Z}/d\mathbb{Z}$, we say that $\gamma$ is a {\it $j$-node} if $\operatorname{res}_d (\gamma)=j$. In addition, $\gamma$~is called a~{\it removable $j$-node} for $\lambda$ if the set $[\lambda]\setminus\{\gamma\} $ is the Young diagram of some partition $\mu$. In this case, we also say that $\gamma$ is an {\it addable $j$-node} for $\mu$. We write $ \mu \overset{j}{\rightarrow} \lambda$ if $[\mu]\subset [\lambda]$ and $[\lambda]\setminus [\mu]=\{\gamma\}$ for a $j$-node $\gamma$.

Let $\gamma=(a,b)$ and $\gamma'=(a',b')$ be two addable or removable $j$-nodes of the same partition $\lambda$. Then we write $\gamma>\gamma'$ if $a<a'$. Let $w_j (\lambda)$ be the word obtained by reading all the addable and removable $j$-nodes in increasing order and by encoding each addable $j$-node with the letter $A$ and each removable $j$-node with the letter $R$. Then deleting as many subwords $RA$ in this word as possible, we obtain a sequence $A\cdots A R \cdots R$. The node corresponding to the rightmost $A$ (if it exists) is called the {\it good addable $j$-node} and the node corresponding to the leftmost~$R$ (if it exists) is called the {\it good removable $j$-node}.

\subsection{Fock space and Kashiwara operators}\label{fock}
 Let $\mathcal{F} := \mathbb{C} \mathcal{P}$ be the $\mathbb{C}$-vector space with basis given by the set $\mathcal{P}$ of all
 partitions. There is an action of the quantum group $\mathcal{U} (\widehat{\mathfrak{sl}}_d)$ on $\mathcal{F}$ \cite{MM} which makes $\mathcal{F}$ into an integrable module of level~$1$. The Kashiwara operators $\widetilde{E}_{i,d}$ and $\widetilde{F}_{i,d}$ are then def\/ined as follows:
\begin{gather*}\widetilde{F}_{i,d} \cdot\lambda= \begin{cases}
 \mu & \textrm{if } \lambda \overset{i}{\rightarrow} \mu \text{ and } [\mu]\setminus[\lambda] \text{ is a good addable $i$-node of $\lambda$},\\
 0 & \text{if $\lambda$ has no good addable $i$-node},\end{cases}
\\
\widetilde{E}_{i,d} \cdot \mu= \begin{cases}
 \lambda & \textrm{if } \lambda \overset{i}{\rightarrow} \mu \text{ and } [\mu]\setminus[\lambda] \text{ is a good removable $i$-node of $\mu$},\\
 0 & \textrm{if $\mu$ has no good removable $i$-node}.
 \end{cases}\end{gather*}
Using these operators one can construct the $\widehat{\mathfrak{sl}}_d$-{\it crystal graph} of $\mathcal{F}$, which is the graph with
\begin{itemize}\itemsep=0pt
 \item vertices: all the partitions $\lambda$ of $n\in \mathbb{N}$,
 \item arrows: there is an arrow from $\lambda$ to $\mu$ colored by $i\in \mathbb{Z}/d\mathbb{Z}$
 if and only if $\widetilde{F}_{i,d} \cdot \lambda=\mu$, or equivalently if and only if
 $\lambda=\widetilde{E}_{i,d} \cdot \mu$.
\end{itemize}
Note that the def\/inition makes sense for $d=\infty$. The corresponding ${\mathfrak{sl}}_\infty$-crystal graph coincides with the Young graph, also known as the branching graph of the complex irreducible representations of symmetric groups.

The following result can be found for example in \cite[Section~2.2]{LLT}.

\begin{Proposition}\label{regk} A partition $\lambda$ is a $d$-regular partition of $n$ if and only if there exists $(i_1,\ldots,i_n)$ $\in (\mathbb{Z}/d\mathbb{Z})^n$ such that
\begin{gather*} \widetilde{F}_{i_1,d}\cdots \widetilde{F}_{i_n,d}\cdot \varnothing=\lambda .\end{gather*}
\end{Proposition}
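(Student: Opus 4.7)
The plan is to prove both directions by induction on $n=|\lambda|$, reducing everything to two combinatorial lemmas about the Kashiwara operators on the Fock space. The first lemma~(A) asserts that if $\mu$ is $d$-regular and $\widetilde{F}_{i,d}\cdot\mu\neq 0$, then $\widetilde{F}_{i,d}\cdot\mu$ is again $d$-regular. The second lemma~(B) states that every non-empty $d$-regular partition $\lambda$ admits some $i\in\mathbb{Z}/d\mathbb{Z}$ with $\widetilde{E}_{i,d}\cdot\lambda\neq 0$, and that the image is $d$-regular. Granting these, both directions follow: for the reverse implication, one starts from $\varnothing$ (trivially $d$-regular) and applies (A) repeatedly; for the forward implication, one uses (B) to peel off a node producing a $d$-regular $\mu$ of size $n-1$, applies the inductive hypothesis to get a sequence expressing $\mu$, and reinserts the node using the fact that $\widetilde{F}_{i_1,d}$ is a partial inverse to $\widetilde{E}_{i_1,d}$ on non-zero images.

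The substance lies in lemma~(A), which I would prove by contradiction. Suppose $\nu=\widetilde{F}_{i,d}\cdot\mu$ has some value $\ell$ repeated at $d$ consecutive rows $a,a+1,\ldots,a+d-1$. The $d$-regularity of $\mu$ combined with the partition-shape constraint forces the added node to sit at $(a+d-1,\ell)$: any other position either makes $\mu$ itself already have a $d$-fold repetition or breaks the weakly decreasing condition on parts. This pins down $i\equiv\ell-a+1\pmod{d}$, and applying $d$-regularity once more yields $\mu_{a-1}>\ell$ (or $a=1$), so that $(a,\ell+1)$ is an addable $i$-node of $\mu$. In the reading word $w_i(\mu)$, read from the bottom row upwards, the node $(a,\ell+1)$ therefore appears strictly to the right of the candidate $(a+d-1,\ell)$, with no $i$-nodes in the intermediate rows $a+1,\ldots,a+d-2$. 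The asymmetry of the cancellation rule (one deletes only $RA$, never $AR$) should then force the good addable $i$-node to lie weakly above $(a,\ell+1)$, never at $(a+d-1,\ell)$, contradicting $\widetilde{F}_{i,d}\cdot\mu=\nu$. Lemma~(B) is handled by a symmetric analysis, tracking the $R$ contributed by the bottom-most row of $\lambda$ and checking that it (or some higher $R$) survives cancellation, producing a good removable $i$-node whose removal stays within the $d$-regular partitions by~(A).

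The main obstacle is the careful bookkeeping of the reading word contributed by the rows strictly above row~$a$. A~priori, an $R$ lying high enough in the diagram could match the spurious $A$ at $(a,\ell+1)$, potentially leaving $(a+d-1,\ell)$ as the rightmost surviving $A$ and invalidating the contradiction. To rule this out one has to iterate the analysis over successive blocks of equal parts in $\mu$, exhausting $d$-regularity at each step; this is probably most cleanly formulated using the ladder decomposition or the Lascoux--Leclerc--Thibon ``regularisation'' reformulation of good nodes referenced in~\cite[Section~2.2]{LLT}. This recursive combinatorial argument is the technical heart of the proof, and is what distinguishes the proposition from a purely formal statement about the connected component of $\varnothing$ in the crystal of~$\mathcal{F}$.
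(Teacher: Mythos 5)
First, note that the paper does not prove this proposition at all: it is stated with a pointer to \cite[Section~2.2]{LLT}, so any complete argument you give is necessarily "different from the paper". Your overall strategy (induction on $n$ via the two lemmas (A) and (B)) is the standard one and is sound, and your proof of (A) is essentially complete: the case analysis correctly forces the added node to be $(a+d-1,\ell)$ and produces the competing addable $i$-node $(a,\ell+1)$ immediately to its right in $w_i(\mu)$. But the "main obstacle" you then devote your energy to is not actually there. The $RA$-cancellation is a bracket matching and is therefore nested: if an $R$ lying to the left of $A_{(a+d-1,\ell)}$ in the word were matched with $A_{(a,\ell+1)}$, the entire segment between them --- which contains $A_{(a+d-1,\ell)}$ --- would have to cancel, so $A_{(a+d-1,\ell)}$ would not survive either. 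Hence either $A_{(a,\ell+1)}$ survives (and sits to the right of $A_{(a+d-1,\ell)}$, so the latter is not the rightmost surviving $A$) or $A_{(a+d-1,\ell)}$ is itself cancelled; in both cases it is not the good node. No ladder decomposition or regularisation is needed.

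The genuine gap is Lemma (B), which you leave as a gesture, and the specific plan you give would fail: the $R$ contributed by the bottom-most row of $\lambda$ can perfectly well be cancelled by an addable $i$-node higher up, so "tracking" it proves nothing. The correct choice is the \emph{topmost} removable node, i.e., $(m_1,\lambda_1)$ where $m_1<d$ is the multiplicity of the largest part $\lambda_1$. Take $i=\lambda_1-m_1$. The only addable or removable node lying strictly above it is $(1,\lambda_1+1)$, of residue $\lambda_1$, which equals $i$ only if $d\mid m_1$; by $d$-regularity this fails, so the letter $R$ at $(m_1,\lambda_1)$ has no $A$ to its right in $w_i(\lambda)$ and must survive, whence a good removable $i$-node exists. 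Separately, your claim that the removal "stays within the $d$-regular partitions by (A)" is backwards: (A) says that adding a good node to a $d$-regular partition keeps it $d$-regular, which gives no information when you do not yet know that $\mu=\widetilde{E}_{i,d}\cdot\lambda$ is $d$-regular. You need the mirror statement (removal of a good node from a $d$-regular partition is $d$-regular), proved by an argument symmetric to (A): if $\mu$ had a part repeated $d$ times, the removed node would have to sit at the top row $a$ of that block, and then $(a+d-1,\ell)$ would be a removable $i$-node of $\lambda$ lying to the left of $R_{(a,\ell+1)}$ with nothing in between, contradicting goodness of the latter. With these two repairs the induction closes.
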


In other words, the connected component the $\widehat{\mathfrak{sl}}_d$-crystal graph containing the empty partition is the full subgraph of the $\widehat{\mathfrak{sl}}_d$-crystal whose vertices are labelled by $d$-regular partitions.

The arrows in this component give the branching rule for induction and restriction in the Hecke algebra of symmetric groups at a primitive $d$-th root of unity (see \cite{Brun} for more details). The partitions $\lambda$ for which we have $\widetilde{E}_{i,d} \cdot \lambda=0$ for all $i\in \mathbb{Z}/d\mathbb{Z}$ are the \emph{highest weight vertices}. One can observe that they correspond to partitions of the form $\lambda=\mu^d$ for some partition~$\mu$.

There is also a representation theoretic interpretation of the other components, using the representation theory of the f\/inite general linear group. Let $e$ be the order of $q$ modulo $\ell$, which we assume to be dif\/ferent from $1$.
Following~\cite{GHJ}, one can def\/ine a \emph{weak Harish-Chandra theory} for unipotent representations of $\mathrm{GL}_n(q)$ for various $n$. Recall from Section~\ref{sec:hcgln} that these unipotent representations are parametrized by partitions. Consequently, the complexif\/ied Grothendieck group of the category of unipotent representations is naturally isomophic to $\mathcal{F}$. Under this identif\/ication, it follows from \cite{ChRou08} that the action of $\mathcal{U} (\widehat{\mathfrak{sl}}_e)$ comes from a truncated version of Harish-Chandra induction and restriction. As in \cite{DVV}, we deduce that:
\begin{itemize}\itemsep=0pt
\item A simple unipotent module $S(\lambda)$ is weakly cuspidal if and only if $\lambda$ labels a highest weight vertex in the $\widehat{\mathfrak{sl}}_e$-crystal graph of $\mathcal{F}$.
\item Two simple modules $S(\lambda)$ and $S(\mu)$ of $k\mathrm{GL}_n(q)$ lie in the same weak Harish-Chandra series if and only if
 there exist a highest weight vertex $\nu$, $k\in \mathbb{N}$, $(i_1,\ldots,i_k)\in (\mathbb{Z}/e\mathbb{Z})^k$
 and $(j_1,\ldots,j_k)\in (\mathbb{Z}/e\mathbb{Z})^k$ such that
 \begin{gather*}\widetilde{F}_{i_1,e}\ldots \widetilde{F}_{i_k,e}\cdot\nu=\lambda,\qquad \text{and} \qquad
 \widetilde{F}_{j_1,e}\ldots \widetilde{F}_{j_k,e}\cdot\nu=\mu.\end{gather*}
 This means that $\lambda$ and $\mu$ are in the same connected component of the associated $\widehat{\mathfrak{sl}}_e$-crystal graph.
\end{itemize}

\subsection[Crystals: the case $\ell=\infty$]{Crystals: the case $\boldsymbol{\ell=\infty}$}
Let $e\in \mathbb{Z}_{>1}$. Recall from Section~\ref{sec:partitions} that any partition $\lambda$ of $n$ can be decomposed in a unique way as
\begin{gather*}\lambda=\lambda_{(-1)} \sqcup (\lambda_{(0)})^e,\end{gather*}
where $\lambda_{(-1)}$ is $e$-regular.

We claim that the entire $\widehat{\mathfrak{sl}}_e$-crystal graph structure on the Fock space may be recovered from
 the subgraph with vertices labelled by $e$-regular partitions. Indeed,
it follows from the def\/inition of the Kashiwara operators that for any partition $\lambda$
\begin{gather}\label{eq:ecrystal}
\widetilde{F}_{i,e} \lambda=\mu\iff \widetilde{F}_{i,e} \lambda_{(-1)}=\mu_{(-1)} \qquad \text{and} \qquad \lambda_{(0)}=\mu_{(0)}.
\end{gather}
One can now def\/ine crystal operators $\widetilde{F}_{i,\infty,0}$ and $\widetilde{E}_{i,\infty,0}$ for all $i\in \mathbb{Z}$ by
\begin{gather*}
\widetilde{F}_{i,\infty,0} \lambda:=\mu \iff \widetilde{F}_{i,\infty} \lambda_{(0)}=\mu_{(0)} \qquad \text{and} \qquad \lambda_{(-1)}=\mu_{(-1)}, \\
\widetilde{E}_{i,\infty,0} \lambda:=\mu \iff \widetilde{E}_{i,\infty} \lambda_{(0)}=\mu_{(0)} \qquad \text{and} \qquad \lambda_{(-1)}=\mu_{(-1)}.
\end{gather*}
This endows $\mathcal{P}$ with an ${\mathfrak{sl}}_{\infty}$-crystal structure, which by~\eqref{eq:ecrystal} commutes with the $\widehat{\mathfrak{sl}}_e$-crystal structure. Note that the only highest weight with respect to these two structures is the empty partition. In fact, these constructions already appear in the work of Losev \cite{Lo} (where the ${\mathfrak{sl}}_{\infty}$-crystal is called the \emph{Heisenberg crystal}, see also \cite[Proposition~4.6]{Lo2}).

\subsection[Crystals: the case $\ell\in \mathbb{N}$]{Crystals: the case $\boldsymbol{\ell\in \mathbb{N}}$}
More generally, recall from Section~\ref{sec:hcgln} that any partition $\lambda$ of $n$ can be decomposed in a unique way as
\begin{gather}\label{eq:decomplambdabis}
\lambda = \lambda_{(-1)} \sqcup (\lambda_{(0)})^e \sqcup (\lambda_{(1)})^{e\ell} \sqcup \cdots \sqcup (\lambda_{(n)})^{e\ell^n},
\end{gather}
where $\lambda_{(-1)}$ is $e$-regular and each $\lambda_{(i)}$ for $i>-1 $ is $\ell$-regular. For example, with $\mu=(2^2.1^7)$, $e=2$ and $\ell=3$ we obtain $\mu= (1)\sqcup (2)^2 \sqcup (1)^{2\times 3}$.

As in the previous section, the $\widehat{\mathfrak{sl}}_e$-crystal operators act on the $e$-regular part of partitions, which makes the def\/inition of other operators possible. Let us f\/ix $j\in \mathbb{N}$. We set $\widetilde{F}_{i,\ell,j} \lambda=\mu$ if and only if $\widetilde{F}_{i,\ell} \lambda_{(j)}=\mu_{(j)}$ and $\lambda_{(l)} = \mu_{(l)}$ for all $l \neq j$. Similarly, $\widetilde{E}_{i,\ell,j} \lambda=\mu$ if and only if $\widetilde{E}_{i,\ell} \lambda_{(j)}=\mu_{(j)}$ and $\lambda_{(l)}=\mu_{(l)}$ for all $l\neq j$. In other words, $\widetilde{F}_{i,\ell,j}$ and
$\widetilde{E}_{i,\ell,j}$ are def\/ined as the usual $\widehat{\mathfrak{sl}}_\ell$-crystal operators acting on the component $\lambda_{(j)}$ in the decomposition~\eqref{eq:decomplambdabis}, or equivalently on the (non-necessarily $\ell$-regular) partition given by $\lambda_{(j)}\sqcup \lambda_{(j+1)}^{\ell} \sqcup \cdots$.

As a consequence we obtain an $\widehat{\mathfrak{sl}}_e$-crystal structure of level $1$ together with many $\widehat{\mathfrak{sl}}_\ell$-crystal structures (each of them indexed by an integer $j \in \mathbb{N}$, and of level $e\ell^j$) on the set of partitions. The following proposition is clear using the decomposition of a partition.

\begin{Proposition}The above $\widehat{\mathfrak{sl}}_e$-crystal structure and the various $\widehat{\mathfrak{sl}}_\ell$-crystal structures
on $\mathcal{P}$ mutually commute.
\end{Proposition}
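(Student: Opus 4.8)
The statement to prove is that the $\widehat{\mathfrak{sl}}_e$-crystal structure of level $1$ and the various $\widehat{\mathfrak{sl}}_\ell$-crystal structures (indexed by $j\in\mathbb{N}$) on $\mathcal{P}$ mutually commute. The plan is to reduce everything to the single observation that each crystal operator, by construction, acts on exactly one component of the $e$-$\ell$-adic decomposition \eqref{eq:decomplambdabis} while leaving all other components fixed. First I would record that by \eqref{eq:ecrystal} the operators $\widetilde{F}_{i,e}$, $\widetilde{E}_{i,e}$ modify only the component $\lambda_{(-1)}$ and fix $\lambda_{(0)},\lambda_{(1)},\ldots$; this is immediate from the definition of the Kashiwara operators, since concatenating with a fixed partition $(\lambda_{(0)})^e\sqcup(\lambda_{(1)})^{e\ell}\sqcup\cdots$ only adds addable and removable $i$-nodes in rows strictly below those of $\lambda_{(-1)}$ in a way that cancels in pairs and hence does not affect which node is the good addable/removable one. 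Next, by the very definition given just above the proposition, the operators $\widetilde{F}_{i,\ell,j}$, $\widetilde{E}_{i,\ell,j}$ modify only the component $\lambda_{(j)}$ and fix all $\lambda_{(l)}$ with $l\neq j$.

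Given these two facts, commutativity follows formally. For $j\neq j'$ the operators in the $j$-th $\widehat{\mathfrak{sl}}_\ell$-family and those in the $j'$-th family act on disjoint components, so they obviously commute on $\mathcal{P}$ (including the case where one of the two sides produces $0$: if one operator sends $\lambda$ to $0$ it does so because of a condition on a single component, which the other operator does not touch, so the composite is $0$ in either order). The same argument applies to the commutation of the $\widehat{\mathfrak{sl}}_e$-family with any $\widehat{\mathfrak{sl}}_\ell$-family, since the $\widehat{\mathfrak{sl}}_e$-operators act on the component $\lambda_{(-1)}$ and the $\widehat{\mathfrak{sl}}_\ell$-operators act on a component $\lambda_{(j)}$ with $j\geq 0$. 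Finally, commutation of two operators within the same family (same $j$) is not asserted and not needed; the proposition concerns the mutual commutation of the distinct crystal structures.

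I would then assemble this into a short formal argument: identify $\mathcal{P}$ with $\mathrm{Reg}_e\times\prod_{j\geq 0}\mathrm{Reg}_\ell$ via $\lambda\mapsto(\lambda_{(-1)},\lambda_{(0)},\lambda_{(1)},\ldots)$ (a bijection by uniqueness of the $e$-$\ell$-adic decomposition), observe that each crystal operator of each family is, under this identification, the corresponding classical Kashiwara operator acting in a single factor and the identity on all other factors, and invoke the trivial fact that endomorphisms of a product set acting on disjoint factors commute (with the convention that an operator sending an element to $0$ is a partial map whose composites are handled as above). The only point requiring a genuine verification is the compatibility statement behind \eqref{eq:ecrystal} — that appending a fixed tail to a partition does not disturb the good nodes computation on the $e$-regular head — but this is exactly the content already invoked in Section~\ref{fock} for the Fock-space crystal and is quoted there; here the same reasoning is reused with $\ell$ in place of $e$. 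I do not expect any real obstacle: the proposition is, as the authors say, \emph{clear using the decomposition of a partition}, and the proof is essentially the bookkeeping of which component each operator touches.
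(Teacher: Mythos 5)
Your proposal is correct and follows exactly the route the paper intends: the paper's entire ``proof'' is the sentence that the claim is clear from the decomposition of a partition, and your argument is precisely the bookkeeping that makes this explicit (each family of operators touches a single component of the $e$-$\ell$-adic decomposition and fixes the rest, so distinct families commute). The only substantive ingredient you flag, the compatibility \eqref{eq:ecrystal}, is likewise taken for granted in the paper, so there is no gap relative to the source.
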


We can thus def\/ine a graph containing all the information on the $\widehat{\mathfrak{sl}}_e$-crystal structure and $\widehat{\mathfrak{sl}}_\ell$-crystal structures. There is then an obvious notion of highest weight.

\begin{Lemma}\label{lem:hw} The empty partition is the unique highest weight vertex with respect to the above crystal structure.
\end{Lemma}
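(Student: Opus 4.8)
The plan is to unwind the definitions of the various crystal operators in terms of the $e$-$\ell$-adic decomposition
\begin{gather*}
\lambda = \lambda_{(-1)} \sqcup (\lambda_{(0)})^e \sqcup (\lambda_{(1)})^{e\ell} \sqcup \cdots \sqcup (\lambda_{(n)})^{e\ell^n}
\end{gather*}
and show that a partition killed by every $\widetilde{E}_{i,e}$ and every $\widetilde{E}_{i,\ell,j}$ must have all components empty. First I would observe that the $\widehat{\mathfrak{sl}}_e$-operators act only on the $e$-regular part $\lambda_{(-1)}$, via $\widetilde{E}_{i,e}\lambda = \mu$ iff $\widetilde{E}_{i,e}\lambda_{(-1)} = \mu_{(-1)}$ and $\lambda_{(l)} = \mu_{(l)}$ for $l \geq 0$ (this is exactly~\eqref{eq:ecrystal} combined with the level-$j$ version of that relation). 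Hence if $\lambda$ is highest weight for the $\widehat{\mathfrak{sl}}_e$-structure, then $\lambda_{(-1)}$ is a highest weight vertex of the ordinary $\widehat{\mathfrak{sl}}_e$-crystal on $e$-regular partitions; by the remark following Proposition~\ref{regk} (the highest weight vertices are the partitions of the form $\mu^e$), and since $\lambda_{(-1)}$ is itself $e$-regular, this forces $\lambda_{(-1)} = \varnothing$.

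Next I would run the same argument for each $j \geq 0$. By construction $\widetilde{E}_{i,\ell,j}$ is the usual $\widehat{\mathfrak{sl}}_\ell$-operator acting on the partition $\lambda_{(j)} \sqcup \lambda_{(j+1)}^{\ell} \sqcup \cdots$, whose $\ell$-regular part is exactly $\lambda_{(j)}$ (because $\lambda_{(j)}$ is $\ell$-regular and the remaining summand is an $\ell$-th power). So $\widetilde{E}_{i,\ell,j}\lambda = 0$ for all $i$ is equivalent to $\lambda_{(j)}$ being a highest weight vertex of the $\widehat{\mathfrak{sl}}_\ell$-crystal; again, since $\lambda_{(j)}$ is $\ell$-regular, the only possibility is $\lambda_{(j)} = \varnothing$. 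Running over all $j \geq 0$ gives $\lambda_{(j)} = \varnothing$ for every $j \geq 0$, and together with $\lambda_{(-1)} = \varnothing$ we conclude $\lambda = \varnothing$. Conversely $\varnothing$ is obviously highest weight for all the structures, since it has no removable nodes at all.

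The only genuinely substantive input is the characterization of the highest weight vertices of the ordinary $\widehat{\mathfrak{sl}}_d$-crystal (for $d = e$ and $d = \ell$): they are precisely the $\widetilde{E}_{i,d}$-annihilated partitions, which are the $d$-th powers $\mu^d$, so that within the set of $d$-regular partitions only $\varnothing$ survives. This is the content of Proposition~\ref{regk} and the observation immediately after it, both already recorded in the excerpt, so the proof is essentially a bookkeeping exercise reducing the multi-structure highest weight condition to these single-crystal facts applied componentwise. The step most likely to need care is making sure the ``equivalently on the partition $\lambda_{(j)}\sqcup\lambda_{(j+1)}^{\ell}\sqcup\cdots$'' reformulation of $\widetilde{E}_{i,\ell,j}$ is used correctly, i.e.\ that the $\ell$-regular part of that partition is indeed $\lambda_{(j)}$ and that highest weight for the operator on the big partition is equivalent to $\lambda_{(j)}$ being a highest weight vertex; this follows from the level-$j$ analogue of~\eqref{eq:ecrystal}, but it is the one place where one must be explicit rather than wave hands.
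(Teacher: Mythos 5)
Your proof is correct and takes essentially the same route as the paper: both reduce the multi-crystal highest-weight condition component by component to the fact that a non-empty $d$-regular partition is never a highest weight vertex of the $\widehat{\mathfrak{sl}}_d$-crystal (Proposition~\ref{regk} and the observation that highest weight vertices have the form $\mu^d$). The paper phrases it contrapositively (a non-empty $\lambda$ has some non-zero $\widetilde{E}$), while you argue directly that all components must vanish, but the content is identical.
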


In other words, the corresponding crystal graph is connected.

\begin{proof}
Assume that $\lambda$ is a non-empty partition. Then there exists $r\in\mathbb{Z}_{\geq -1}$
 such that $\lambda_{(r)}\neq \varnothing$. Now
 \begin{itemize}\itemsep=0pt
 \item If $r=-1$ then $\lambda_{-1}$ is $e$-regular and we have $\widetilde{E}_{i,e}\cdot \lambda\neq \varnothing$
 for some $i\in \mathbb{Z}$.
 \item If $r\neq-1$ then $\lambda_{r}$ is $\ell$-regular and we have $\widetilde{E}_{i,\ell,r}\cdot \lambda\neq \varnothing$
 for some $i\in \mathbb{Z}$.
\end{itemize}
Thus $\lambda$ is not a highest weight vertex, and $\varnothing$ is the only highest weight vertex.
\end{proof}

\begin{Example}Let $e=2$ and consider the partition $\lambda=(2^2.1^7)$.
\begin{enumerate}\itemsep=0pt
\item[(a)] Assume that $\ell=\infty$. Then we have $\lambda_{(-1)}=1$ and $\lambda_{(0)}=2.1^3$. We have
\begin{gather*}\widetilde{F}_{2,\infty,0} \cdot \lambda:=(1)\sqcup \big(3.1^3\big)^2=3^2. 1^7,\\
\widetilde{F}_{0,\infty,0} \cdot \lambda:=(1)\sqcup \big(2^2.1^2\big)^2=2^4 .1^5,\\
\widetilde{F}_{-4,\infty,0}\cdot \lambda:=(1)\sqcup \big(2.1^4\big)^2=2^2 .1^9.\end{gather*}
All the others Kashiwara operators $\widetilde{F}_{j,\infty,0}$ act by $0$ on $\lambda$.

\item[(b)] Assume that $\ell=3$. Then we have $\lambda_{(-1)}=1$, $\lambda_{(0)}=2$ and $\lambda_{(1)}=1$.
\begin{gather*}\widetilde{F}_{2,3,0} \cdot \lambda:=(1)\sqcup (3)^2 \sqcup 1^6 =3^2 .1^7\end{gather*}
and the action of the other operators $\widetilde{F}_{j,0,3}$ are $0$. We also have
\begin{gather*}\widetilde{F}_{1,3,1} \cdot \lambda:=(1)\sqcup (2)^2 \sqcup 2^6 =2^8 .1,\\
\widetilde{F}_{2,3,1}\cdot \lambda:=(1)\sqcup (2)^2 \sqcup (1.1)^6 =2^2 .1^{13}.\end{gather*}
One can also consider
\begin{gather*}\widetilde{F}_{1,3,2} \cdot \lambda=(1)\sqcup (3)^2 \sqcup 1^6\sqcup 1^{18}=3^2 . 1^{25}\end{gather*}
or, more generally for $k\geq 2$:
\begin{gather*}\widetilde{F}_{1,3,k} \cdot \lambda=(1)\sqcup (3)^2\sqcup 1^6 \sqcup 1^{3^k \times 2}=3^2 .1^{2\times 3^k+6}.\end{gather*}
\end{enumerate}
\end{Example}

\begin{Remark}Assume $\lambda$ is a partition of $n$ such that $e\ell>n$. Then it follows from the construction that $\lambda_{(k)}=\varnothing$ for all $k>0$. Note also that $\lambda_{(0)}$ is a partition of rank strictly less than $\ell$, and therefore it is $\ell$-regular. In that case the action of $\widetilde{E}_{i,\ell,0}$ and $\widetilde{E}_{i,\infty,0}$ coincide. In particular, the $\widehat{\mathfrak{sl}}_\ell$ and ${\mathfrak{sl}}_{\infty}$-crystal structures coincide for the partitions of rank less than~$n$.
\end{Remark}

\subsection{Crystals and the Mullineux involution}\label{sec:crystalmull}
Recall that any $d$-regular partition $\lambda$ belongs to the connected component of the empty partition in the $\widehat{\mathfrak{sl}_d}$-crystal graph. The image of $\lambda$ by the Mullineux involution $\Mull_d$ is also in that component and it can be computed using the following theorem.

\begin{Theorem}[Ford--Kleshchev \cite{ForKle}]\label{FK}
 Let $\lambda \in \operatorname{Reg}_d (n)$ and let $(i_1,\ldots,i_n)\in (\mathbb{Z}/d\mathbb{Z})^n$ such that
\begin{gather*} \widetilde{F}_{i_1,d}\cdots \widetilde{F}_{i_n,d} \cdot \varnothing=\lambda \end{gather*}
$($see Proposition~{\rm \ref{regk})}. Then there exists $\mu \in \operatorname{Reg}_d (n)$ such that
\begin{gather*} \widetilde{F}_{-i_1,d}\cdots \widetilde{F}_{-i_n,d} \cdot \varnothing=\mu
\end{gather*}
and $\mu = \Mull_d (\lambda)$.
\end{Theorem}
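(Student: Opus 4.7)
The plan is to interpret the crystal operators $\widetilde{F}_{i,d}$ in terms of the modular representation theory of Hecke algebras of type $A$, and to exploit the compatibility of the involution $\alpha$ of Section~\ref{sec:hecke} with a natural symmetry of these operators. Throughout, I specialise at a parameter $q$ whose order modulo~$\ell$ equals $d$, so that $\alpha^*(D(\lambda)) = D(\Mull_d(\lambda))$ by the very definition of $\Mull_d$.

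First I would recall the theorem of Lascoux--Leclerc--Thibon and Ariki--Grojnowski realising the $\widehat{\mathfrak{sl}}_d$-crystal of $\mathcal{F}$ on $d$-regular partitions via the branching rule for the Hecke algebras $\mathcal{H}_q(\mathfrak{S}_n)$. For each $i \in \mathbb{Z}/d\mathbb{Z}$ one defines the refined induction functor $F_i\colon \modn \mathcal{H}_q(\mathfrak{S}_{n-1}) \to \modn \mathcal{H}_q(\mathfrak{S}_n)$ by projecting the ordinary induction onto the generalised $q^i$-eigenspace of the last Jucys--Murphy element $X_n$, together with an analogous restriction functor $E_i$. The operator $\widetilde{F}_{i,d}$ is then computed by taking the head of $F_i(D(\lambda))$, which is the functorial content of Proposition~\ref{regk}.

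Next I would check that $\alpha$ intertwines $F_i$ with $F_{-i}$ (and $E_i$ with $E_{-i}$). The key computation is to show that $\alpha$ inverts the spectrum of the Jucys--Murphy elements: using $\alpha(T_s) = -qT_s^{-1}$ together with the recursion $X_{k+1} = T_k X_k T_k$, one verifies by induction on $k$ that $\alpha(X_k)$ is conjugate (up to a normalising power of $q$) to $X_k^{-1}$, so that the generalised $q^i$-eigenspace of $X_n$ is carried by $\alpha^*$ to the generalised $q^{-i}$-eigenspace. Combined with the evident compatibility of $\alpha$ with the embedding $\mathcal{H}_q(\mathfrak{S}_{n-1}) \hookrightarrow \mathcal{H}_q(\mathfrak{S}_n)$, this yields the natural isomorphism $\alpha^* \circ F_i \simeq F_{-i} \circ \alpha^*$.

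Combining these two ingredients, applying $\alpha^*$ to $D(\widetilde{F}_{i,d}\cdot \lambda) \simeq \mathrm{head}\, F_i(D(\lambda))$ gives
\begin{gather*}
D(\Mull_d(\widetilde{F}_{i,d}\cdot \lambda)) \simeq \alpha^*\bigl(\mathrm{head}\, F_i(D(\lambda))\bigr) \simeq \mathrm{head}\, F_{-i}\bigl(D(\Mull_d(\lambda))\bigr) \simeq D(\widetilde{F}_{-i,d}\cdot \Mull_d(\lambda)).
\end{gather*}
Since $\Mull_d(\varnothing) = \varnothing$, an induction on $n$ applied to $\lambda = \widetilde{F}_{i_1,d}\cdots \widetilde{F}_{i_n,d}\cdot \varnothing$ then produces $\Mull_d(\lambda) = \widetilde{F}_{-i_1,d}\cdots \widetilde{F}_{-i_n,d}\cdot \varnothing$, which is the desired $\mu$. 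The main obstacle is the middle step: making the informal assertion ``$\alpha$ inverts the Jucys--Murphy spectrum'' precise, especially in the specialisation $q = 1$ (where $d = \ell$ and the Jucys--Murphy elements act nilpotently rather than semisimply). In that case the multiplicative relation must be replaced by its additive analogue $\alpha(J_k) = -J_k$ in the degenerate affine Hecke algebra, and transferred back to $\mathcal{H}_q(\mathfrak{S}_n)$ by a deformation argument along the lines of Brundan~\cite{Brun}.
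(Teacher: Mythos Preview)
The paper does not give a proof of Theorem~\ref{FK}; it is quoted as a result from the literature, attributed to Ford--Kleshchev~\cite{ForKle} (the crystal-theoretic formulation being due to Kleshchev~\cite{Kle} for $q=1$ and the extension to Hecke algebras at roots of unity to Brundan~\cite{Brun}, as the paper remarks in Section~\ref{sec:hecke}). There is therefore nothing in the paper to compare your proposal against.

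That said, your sketch is essentially Brundan's argument: realise the crystal operators via $i$-induction functors on simple Hecke algebra modules, check that the sign automorphism~$\alpha$ exchanges $F_i$ with $F_{-i}$, and conclude by induction on~$n$. This is exactly the strategy the paper is invoking when it cites~\cite{Brun} and~\cite{Kle}. The one place where your outline would need care is the Jucys--Murphy computation: with the standard normalisation one does not get a literal conjugacy $\alpha(X_k)\sim X_k^{-1}$, but rather an identity of the form $\alpha(X_k)=q^{c_k}X_k^{-1}$ for a suitable integer $c_k$, which is already enough to swap the generalised $q^i$- and $q^{-i}$-eigenspaces. Your comment on the degenerate case $q=1$ is well taken; this is precisely the division of labour between~\cite{Kle} (symmetric group, additive Jucys--Murphy elements) and~\cite{Brun} (Hecke algebra, multiplicative ones).
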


\begin{Remark}\label{conj}
When $d > n$ every addable node is good, and Theorem \ref{FK} implies that \smash{$M_d(\lambda) {=} \lambda^t$}, the conjugate partition (see also Remark~\ref{rem:asymptotic}).
\end{Remark}

From the def\/inition of $\mathsf{M}_{e,\ell}$ (see Def\/inition \ref{def:mullineux}) and the construction of the various crystal operators, Theorem \ref{FK} generalizes to the following situation.

\begin{Proposition}\label{prop:main3}
Let $\lambda$ be a partition which we write
\begin{gather*}\widetilde{F}_{i_1,p_1,k_1}\cdots \widetilde{F}_{i_m,p_m,k_m} \cdot \varnothing =\lambda\end{gather*}
with for all $j=1,\ldots,m$,
\begin{itemize}\itemsep=0pt
\item $k_j\in \mathbb{Z}_{\geq -1}$,
\item $p_j=e$ if $k_j=-1$ and $p_j=\ell$ otherwise,
\item $i_j \in \mathbb{Z}/p_j \mathbb{Z}$.
\end{itemize}
Then
\begin{gather*}\widetilde{F}_{-i_1,p_1,k_1}\cdots \widetilde{F}_{-i_m,p_m,k_m} \cdot \varnothing =\Mull_{e,\ell}(\lambda).\end{gather*}
\end{Proposition}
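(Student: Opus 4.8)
The plan is to reduce the statement to the classical Ford--Kleshchev theorem (Theorem~\ref{FK}) by exploiting the product structure of the generalized crystal operators. First I would recall that by construction each operator $\widetilde{F}_{i,p,k}$ acts only on the component $\lambda_{(k)}$ in the $e$-$\ell$-adic decomposition $\lambda = \lambda_{(-1)} \sqcup (\lambda_{(0)})^e \sqcup (\lambda_{(1)})^{e\ell} \sqcup \cdots$, leaving all other components $\lambda_{(l)}$, $l \neq k$, unchanged; moreover on the $k$-th component it acts as the ordinary $\widehat{\mathfrak{sl}}_{p}$-Kashiwara operator $\widetilde{F}_{i,p}$ (with $p = e$ if $k = -1$ and $p = \ell$ otherwise). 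Since the various crystal structures mutually commute (Proposition before Lemma~\ref{lem:hw}), a word $\widetilde{F}_{i_1,p_1,k_1}\cdots \widetilde{F}_{i_m,p_m,k_m}$ applied to $\varnothing$ can be sorted according to the value of $k_j$: the subword of operators with $k_j = k$ builds, in the $k$-th slot, exactly the partition $\lambda_{(k)}$ out of $\varnothing$ using ordinary $\widehat{\mathfrak{sl}}_{p_k}$-operators. In particular each $\lambda_{(k)}$ is $p_k$-regular by Proposition~\ref{regk}, consistent with the decomposition.

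Next I would apply Theorem~\ref{FK} componentwise. For the component $\lambda_{(-1)}$, which is $e$-regular and obtained from $\varnothing$ by the $\widehat{\mathfrak{sl}}_e$-operators $\widetilde{F}_{i_j,e}$ with $k_j=-1$, Theorem~\ref{FK} says that negating all the colors produces $\Mull_e(\lambda_{(-1)})$ in the $(-1)$-slot. Similarly, for each $k \geq 0$, the component $\lambda_{(k)}$ is $\ell$-regular, built by the $\widehat{\mathfrak{sl}}_\ell$-operators $\widetilde{F}_{i_j,\ell}$ with $k_j=k$, so negating those colors yields $\Mull_\ell(\lambda_{(k)})$ in the $k$-slot. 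Reassembling, $\widetilde{F}_{-i_1,p_1,k_1}\cdots \widetilde{F}_{-i_m,p_m,k_m} \cdot \varnothing$ is the partition with $(-1)$-component $\Mull_e(\lambda_{(-1)})$ and $k$-component $\Mull_\ell(\lambda_{(k)})$ for $k \geq 0$, that is,
\begin{gather*}
\Mull_e(\lambda_{(-1)}) \sqcup (\Mull_\ell(\lambda_{(0)}))^e \sqcup (\Mull_\ell(\lambda_{(1)}))^{e\ell} \sqcup \cdots,
\end{gather*}
which is precisely $\Mull_{e,\ell}(\lambda)$ by Definition~\ref{def:mullineux}.

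The one point requiring a little care — and the main obstacle — is the bookkeeping that lets one pass from a single interleaved word in the operators $\widetilde{F}_{i_j,p_j,k_j}$ to the separated words in each slot, and to check that negating colors commutes with this separation. Since operators attached to different values of $k$ act on disjoint slots they literally commute, so reordering the word is harmless; and negating the color $i_j \mapsto -i_j$ is performed slot by slot, so it is compatible with the decomposition. Once this is observed, the result follows formally from Theorem~\ref{FK} applied in each slot, with no further computation needed. One should also note, for well-definedness, that the choice of reduced word does not matter: this is already part of the content of Theorem~\ref{FK} (the output $\mu$ depends only on $\lambda$), applied in each component.
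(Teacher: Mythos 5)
Your proposal is correct and is essentially the paper's own argument: the authors dispose of this proposition in one line, noting that it is "clear" from the definition of $\Mull_{e,\ell}$, the slot-by-slot construction of the operators $\widetilde{F}_{i,p,k}$, and Theorem~\ref{FK} applied in each component. Your write-up simply makes explicit the commutation/sorting step and the componentwise application of Ford--Kleshchev that the paper leaves implicit.
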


\begin{proof}
This is clear as the usual Mullineux involution on the set of $e$-regular partitions is given in
Theorem \ref{FK}.
\end{proof}

\begin{Remark}\label{conj1}
When $\ell=\infty$, the result remains valid and $\Mull_{e,\infty}$ coincides with the operation described by Bezrukavnikov in \cite{Be} and Losev in~\cite{Lo} (see Example~\ref{ex:mull}(b)).
\end{Remark}

\subsection*{Acknowledgements}

The authors gratefully acknowledge f\/inancial support by the ANR grant GeRepMod ANR-16-CE40-0010-01. We thank Gunter Malle, Emily Norton and the referees for their many valuable comments on a preliminary version of the manuscript.

\pdfbookmark[1]{References}{ref}
\LastPageEnding

\end{document}